\theoremstyle{plain}
\newtheorem{theorem}{Theorem}[section]
\newtheorem{proposition}[theorem]{Proposition}
\newtheorem{lemma}[theorem]{Lemma}
\theoremstyle{definition}
\newtheorem{definition}[theorem]{Definition}
\theoremstyle{remark}
\newtheorem{remark}[theorem]{Remark}
\numberwithin{equation}{section} 
\numberwithin{figure}{section}   
\newcommand{\vect}[1]{\mathbf{#1}}
\newcommand{\bu}{\vect{u}}
\newcommand{\bx}{\vect{x}}
\newcommand{\field}[1]{\mathbb{#1}}
\newcommand{\nN}{\field{N}}
\newcommand{\nZ}{\field{Z}}
\newcommand{\nR}{\field{R}}
\newcommand{\nT}{\mathbb T}
\newcommand{\vphi}{\varphi}
\newcommand{\set}[1]{\left\{#1\right\}}
\newcommand{\pnt}[1]{\left(#1\right)}
\title[Reduced Kuramoto-Sivashinsky Equation]{On the well-posedness of an anisotropically-reduced two-dimensional Kuramoto-Sivashinsky equation}
\date{}
\author{Adam Larios}
\address[Adam Larios]{Department of Mathematics, 
                University of Nebraska--Lincoln,
        Lincoln, NE 68588-0130, USA}
\email[Adam Larios]{alarios@unl.edu}
\author{Kazuo Yamazaki}
\address[Kazuo Yamazaki]{Department of Mathematics and Statistics, 
                Texas Tech University, 
       Lubbock, TX, 79409, USA}
\email[Kazuo Yamazaki]{kyamazak@ttu.edu}
\keywords{(Kuramoto-Sivashinsky, Global Well-Posedness, Two-Dimensional.)}
\thanks{MSC 2010 Classification: 35K25, 35K58, 35B65, 35B10}
\begin{document}
\begin{abstract}
The Kuramoto-Sivashinsky equations (KSE) arise in many diverse scientific areas, and are of much mathematical interest due in part to their chaotic behavior, and their similarity to the Navier-Stokes equations.  However, very little is known about their global well-posedness in the 2D case.  Moreover, regularizations of the system (e.g., adding large diffusion, etc.) do not seem to help, due to the lack of any control over the $L^2$ norm.  In this work, we propose a new ``reduced'' 2D model that modifies only the linear part of (the vector form of) the 2D KSE in only one component.  This new model shares much in common with the 2D KSE: it is 4th-order in space, it has an identical nonlinearity which does not vanish in energy estimates, it has low-mode instability, and it lacks a maximum principle.  However, we prove that our reduced model is globally well-posed.  We also examine its dynamics computationally.  Moreover, while its solutions do not appear to be close approximations of solutions to the KSE, the solutions do seem to hold many qualitative similarities with those of the KSE.  We examine these properties via computational simulations comparing solutions of the new model to solutions of the 2D KSE.
\end{abstract}

\maketitle
\thispagestyle{empty}

\noindent
\section{Introduction}\label{secInt}
\noindent
The Kuramoto-Sivashinsky equation (KSE) appears frequently in diverse areas such as the study of instabilities in 
laminar flame fronts \cite{Sivashinsky_1977}, plasmas \cite{cohen1976non,laquey1975nonlinear}, reaction-diffusion systems \cite{Kuramoto_Tsuzuki_1975,Kuramoto_Tsuzuki_1976}, and the flow of fluid films on inclined planes \cite{sivashinsky1980vertical}. Indeed, under somewhat generic assumptions, it was shown in \cite{misbah1994secondary} that the dynamics of quite general physical systems obeying certain symmetries can be described in part by the KSE if a certain bifurcation point is exceeded, explaining the ubiquitous appearance of the equation.  Despite its prevalence, very little progress has been made in terms of its mathematical analysis for large times in dimensions higher than one, and major questions remain unanswered even in one dimension.  In this paper, we propose and analyze a hybrid version of the higher-dimensional KSE and Burgers equations that may shed light on the original system.  This new system has many characteristic features in common with the KSE: it is fourth-order in space, it has a low-mode instability, it has an advective-type nonlinearity, and the solution is not divergence-free.  However, unlike for the higher-dimensional KSE, we are able to provide a proof that this new system is globally well-posed, which is the main purpose of the present work.  We also provide computational simulations that compare the dynamics of the 2D KSE to the new 2D system.

The KSE was first derived in \cite{Kuramoto_Tsuzuki_1976,Sivashinsky_1977} (see also \cite{Kuramoto_1978,Sivashinsky_1980_stoichiometry,sivashinsky1980vertical}).  They are given in a domain $\Omega\subset\nR^n$ by
\begin{subequations}\label{KSE}
\begin{align}\label{KSE_eqn}
   \partial_t\bu+(\bu\cdot\nabla) \bu&= -\lambda\triangle\bu-\triangle^2 \bu && \text{ in }\Omega\times(0,T),
   \\\label{KSE_init}
   \bu(x,0) &=\bu^{in}(x) && \text{ in }\Omega,
\end{align}
\end{subequations}
with boundary conditions discussed below.  Here, $\lambda>0$ is a dimensionless constant.   One may also consider the scalar or ``integrated'' form given by
\begin{align}\label{KSE_scalar}
   \partial_t\vphi+\tfrac12|\nabla\vphi|^2&= -\lambda\triangle\vphi-\triangle^2 \vphi.
\end{align}
Note that by setting $\bu := \nabla\phi$, one formally recovers a solution to \eqref{KSE_eqn}.

In the one-dimensional case, with either periodic
($\Omega=\nT:=\nR/2\pi\nZ$) or full-space ($\Omega=\nR$) boundary conditions,
\eqref{KSE} is globally well posed, and in the periodic case has a finite-dimensional global attractor and an inertial manifold (see, e.g., \cite{Collet_Eckmann_Epstein_Stubbe_1993_Attractor,Collet_Eckmann_Epstein_Stubbe_1993_Analyticity,Constantin_Foias_Nicolaenko_Temam_1989,Constantin_Foias_Nicolaenko_Temam_1989_IM_Book,Foias_Nicolaenko_Sell_Temam_1985,Foias_Sell_Temam_1985,Foias_Sell_Titi_1989,Goluskin_Fantuzzi_2019,Goodman_1994,Grujic_2000_KSE,Hyman_Nicolaenko_1986,Ilyashenko_1992,Nicolaenko_Scheurer_Temam_1986,Otto_2009,Robinson_2001,Tadmor_1986,Temam_1997_IDDS,Wittenberg_2014_DCDSA} and the references therein).  In particular, the existence and uniqueness of the solution in the one-dimensional case is shown in \cite{Nicolaenko_Scheurer_1984}; we also refer to \cite{Nicolaenko_Scheurer_Temam_1985} for a result on the finite-dimensionality result using the notion of determining modes. Large-time behavior was also studied for the so-called ``Burgers-Sivashinsky'' equation, $\bu_t+\bu\cdot\nabla \bu = \bu + \triangle \bu$ in \cite{Goodman_1994,Molinet_2000_2D_BS}.  It was also shown in \cite{Cao_Titi_2006_KSE} that the only steady-state solutions to \eqref{KSE} in either $\nR^n$ or $\nT^n$, $n=1,2$, are constant functions.  The question of the global well-posedness of \eqref{KSE} for $n\geq 2$ in the periodic case, or $\nR^n$ is still open in general; however, in dimensions $n=2$ and $3$ for the case of radially symmetric initial data in an annular domain, global well-posedness was proved in \cite{Bellout_Benachour_Titi_2003}, assuming homogeneous Neumann boundary conditions.  On the other hand, in \cite{Pokhozhaev_2008} (see also \cite{Galaktionov_Mitidieri_Pokhozhaev_2008}) it was shown that, under a certain (seemingly non-physical) choice of third-order boundary conditions, for any dimension $n\geq 1$, solutions to \eqref{KSE} develop a singularity in finite time for a certain class of initial conditions.  These issues were discussed in \cite{Larios_Titi_2015_BC_Blowup}, where it was also shown that global well-posedness holds in the one-dimensional case, with a different choice of third-order boundary conditions.  The physical boundary conditions for \eqref{KSE} are given by
   $\bu \equiv \triangle \bu \equiv \mathbf{0} \text{ on }\partial\Omega.$
Currently,  the question of global existence of solutions to \eqref{KSE} under the physical boundary conditions, even in the 1D case, remains open.   Moreover, for $n\geq2$, the question of global well-posedness of \eqref{KSE} in the periodic case, or in the full space $\nR^n$, is also a challenging open question.  However, short-time existence (but not uniqueness) of solutions in Gevrey spaces in the case $\Omega=\nR^n$ for arbitrary dimension $n\in\nN$ was proven in \cite{Biswas_Swanson_2007_KSE_Rn}, and recently, in \cite{Ambrose_Mazzucato_2018}, it was shown that, so long as there are no linearly growing modes, then for sufficiently small initial data in a certain function space based on the Wiener algebra, global existence holds.  We also mention \cite{Sell_Taboada_1992,Benachour_Kukavica_Rusin_Ziane_2014_JDDE_2DKSE}, which studied global existence and attractors in 2D thin domains.

We remark that (\ref{KSE}) may be written in component form as 
\begin{subequations}
\begin{align}
& \partial_{t} u_{1} + (\bu\cdot\nabla) u_{1} = - \lambda \Delta u_{1} - \Delta^{2} u_{1}, \\
& \partial_{t} u_{2} + (\bu\cdot\nabla) u_{2} = - \lambda \Delta u_{2} - \Delta^{2} u_{2}. 
\end{align}
\end{subequations} 

In this paper, we propose and study the following two-dimensional system, which we call the \textit{reduced Kuramoto-Sivashinsky} equations (r-KSE)  written in terms of $\bu=(u_1,u_2)$.
\begin{subequations}\label{KSEr}
\begin{align}
\label{KSEr1}
   \partial_t u_1+(\bu\cdot\nabla) u_1&= \sigma u_1 + \nu\triangle u_1&& \text{ in }\Omega\times(0,T),
   \\
\label{KSEr2}
   \partial_t u_2+(\bu\cdot\nabla) u_2&= -\lambda\triangle u_2-\triangle^2 u_2&& \text{ in }\Omega\times(0,T),
   \\
\label{KSEr_init}
   \bu(\cdot,0) &=\bu^{in}(\cdot) = (u_1^{in},u_2^{in})(\cdot) && \text{ in }\Omega,
\end{align}
\end{subequations}
under periodic boundary conditions on the domain $\Omega = \nT^2 = \nR^2/2\pi\nZ^2 = [0,2\pi]^2$ and .  Here, $T>0$, $\nu>0$, $\sigma > 0$, and $\lambda>0$ are constants.  

\begin{remark}
Note that \eqref{KSEr} no longer appears to arise from a scalar form of the equations such as \eqref{KSE_scalar}, and hence for \eqref{KSE_scalar}, there is no obvious analogue of the modification that takes \eqref{KSE} to \eqref{KSEr}.  One possibility is to use a nonlinearity of the form $\tfrac12\nabla|\bu|^2$ instead of $\bu\cdot\nabla\bu$ in \eqref{KSEr}.  These are formally the same for the 2D KSE if one identifies $\bu\equiv\nabla\phi$, but for system \eqref{KSEr}, we make no assumption that $\bu\equiv\nabla\psi$ for any function $\psi$.  Rather than analyze both possible choices of nonlinearity, we made the arbitrary choice to focus on the nonlinearity $\bu\cdot\nabla\bu$ (this case is slightly more involved, since the $\int_\Omega\bu\,dx$ is no longer preserved by the flow), but results similar to those in this paper can also be proven for the nonlinearity $\tfrac12\nabla|\bu|^2$ 
using nearly identical arguments to those made below.

 We also note that it is clear that if ones switches the roles of $u_1$ and $u_2$ in \eqref{KSEr}, symmetric results to those in the present work hold.  There are several possibilities for 3D and higher-dimensional generalizations of the anisotropic reduction of \eqref{KSE} to \eqref{KSEr}.  The authors plan to investigate these questions in a future work.
\end{remark}


\begin{remark}
 A different modification of the 2D KSE was studied in \cite{Pinto_1998_thesis,Pinto_2001_Gevrey}; however, this was with a drastically simplified nonlinearity ($uu_x$ rather than $\bu\cdot\nabla\bu$) which vanishes in energy estimates.  It is our view that the central difficulty of the higher-dimensional KSE is that the nonlinearity does not vanish in $L^2$ energy estimates, analogous to the vorticity stretching for the 3D Navier-Stokes equations (NSE) term not vanishing in $L^2$ estimates of the vorticity.  We note that in system \eqref{KSEr} proposed above, the nonlinearity is identical to the nonlinearity in the 2D KSE, and hence does not vanish in $L^2$ energy estimates.  See also \cite{Boling_Fengqiu_1993_JPDE,Ioakim_Smyrlis_2016,Tomlin_Kalogirou_Papageorgiou_2018} for some other variations on KSE.
\end{remark}

We note that, as remarked upon above, one of the main obstacles in tackling the global well-posedness of the KSE (\ref{KSE}) in the 2D case is that even though the following one-dimensional integrals vanish, 
\begin{equation}\label{buuu_1D_zero}
\int_{\Omega} u_{i} \frac{\partial u_{i}}{\partial x_{i}}  u_{i}\, dx_i 
= \frac{1}{3}\int_{\Omega}  \frac{\partial }{\partial x_{i}}(u_i^3)\,dx_i 
=0,\qquad i=1,\ldots,n,
\end{equation} 
(which is the crucial fact that allows one to prove global well-posedness in 1D), such a result does not hold for the full nonlinearity in dimensions $n \geq 2$:
\begin{equation}\label{buuu_not_zero}
\int_{\Omega} (\bu\cdot\nabla) \bu \cdot \bu\, d\bx \neq 0,
\end{equation} 
since $\bu$ is not divergence free. This is reminiscent of the situation of the Burgers' equation in contrast to the Navier-Stokes equations; $\int_{\Omega} (\bu\cdot\nabla) \bu \cdot \bu \,d\bx = 0$ in the latter case due to the divergence-free condition while this integral is nonzero in general for the former case. With that in mind, we partially follow the work of \cite{Pooley_Robinson_2016} in the proof of our main result Theorem 3.2. 

Let us also point out that even if the initial data is mean-zero, such a property is not preserved through evolution of (\ref{KSEr}). Again, this is actually valid for the one-dimensional KSE (\ref{KSE}) but not for the two-dimensional KSE (\ref{KSE}) or (\ref{KSEr}), because 
\begin{equation*}
\int_{\Omega} (\bu\cdot\nabla) \bu\, d\bx \neq \mathbf{0}
\end{equation*} 
(which is also in contrast to the case of the NSE). This creates various difficulty such as the lack of applicability of Poincar$\acute{\mathrm{e}}$ inequality. Pooley and Robinson in \cite{Pooley_Robinson_2016} overcame such a difficulty using a bound on the moment of the solution $\bu$ \cite[Lemma 2]{Pooley_Robinson_2016}. We can obtain the analogous result with which we may use Poincar$\acute{\mathrm{e}}$ inequality. However, the computations become rather lengthy. In fact, as we will see, we may overcome this difficulty essentially by doing the estimates in an inhomogeneous space instead of homogeneous space, i.e., an $H^{1}(\mathbb{T}^{2})$-estimate instead of $\dot{H}^{1}(\mathbb{T}^{2})$-estimate. 

\subsection{Some remarks on the equation}

There are many studies that consider modifications of an equation for which the global existence and/or uniqueness of solutions is an open question.  Such works, including the present work, often then show that the modified equation is well-posed.  There are at least two major reasons for such studies.  The first is that sometimes the modified equation can be seen as a better-behaved approximation of the original equation, and thus it may be of use, e.g., in numerical simulations or studies of the dynamics.  For instance, the development of $\alpha$-models for the 3D NSE and related $\alpha$-models (see, e.g., 
\cite{Berselli_Spirito_2017,Cao_Lunasin_Titi_2006,Chen_Foias_Holm_Olson_Titi_Wynne_1998_PF,Rebholz_Layton_2012_book}
and the references therein).

The second reason is more subtle.  A modification of the equation can be seen as a way to try to understand something about the mechanisms underlying the dynamics predicted by the equations.  For instance, let us consider the abstract system
\begin{align}\label{abstract_eqn}
 \partial_t\bu+(\bu\cdot\nabla) \bu = \mathcal{N}(\bu),
\end{align}
where $\mathcal{N}$ is an operator that may be nonlinear and nonlocal.  For instance, if $\mathcal{N}(\bu)=-\lambda\Delta\bu - \Delta^2\bu$, then \eqref{abstract_eqn} is the KSE.  If  $\mathcal{N}(\bu)=\nu\Delta\bu - \nabla p$ (where the pressure $p=\Delta^{-1}\nabla\cdot((\bu\cdot\nabla) \bu)$ and $\Delta^{-1}$ is taken with respect to appropriate boundary conditions), then \eqref{abstract_eqn} is the NSE.  Let us consider the 2-dimensional case for the moment.  If $\mathcal{N}(\bu)=0$, then this equation is the inviscid Burgers equation, which is well-known to blow up in finite time.  If $\mathcal{N}(\bu)=-\nabla p$ (yielding the 2D Euler equations) or $\mathcal{N}(\bu)=\nu\Delta\bu$ (yielding the 2D viscous Burgers equations), then well-posedness is restored.  In the first case, this is due to the pressure ``weakening'' the nonlinearity by causing $\bu$ to be divergence-free, preventing \eqref{buuu_not_zero} and similarly weakening the nonlinearity in higher-order estimates, ultimately allowing proofs of global well-posedness to go through.  In the case of the 2D (or higher dimensional) viscous Burgers equation, the situation is quite different due to \eqref{buuu_not_zero}; however, as observed by O. Ladyzhenskaya (see, e.g., \cite{Ladyzhenskaya_1968} and the discussion in \cite{Pooley_Robinson_2016}), a maximum principle can be found for this system, which prevents the nonlinearity from forming arbitrarily large gradients.  This maximum principle is destroyed by adding a pressure gradient, as in the case of the Euler or the NSE), or by adding a higher-order diffusion, as in the case of the so-called ``hyper-viscous Burgers equation''  where $\mathcal{N}(\bu)=-\nu^2\Delta^2\bu$ (formally, this is \eqref{KSE_eqn} with $\lambda=0$) as pointed out in \cite{Larios_Titi_2015_BC_Blowup}.  In the 3D case, the pressure gradient weakens the nonlinearity in the sense that it prevents \eqref{buuu_not_zero}, but it no longer has a similar effect on higher-order estimates.  Moreover, it still destroys the maximum principle, so it is no longer clear to what extent the pressure affects the nonlinearity, or even whether its net effect is to weaken or strengthen the nonlinearity.  

From this perspective, one can see the reason for the interest in the multi-dimensional KSE (or even the multi-dimensional hyper-viscous Burgers equation).  It provides a setting in which the nonlinearity is rather strong (i.e., not ``weakened'' by the pressure or the maximum principle), and where the formation of arbitrarily large gradients is checked only by hyperdiffusion.  Interesting results in this direction appear even in the 1D case.  For instance, in \cite{Kostianko_Titi_Zelik_2018}, it is shown that adding a large dispersion term to the 1D KSE weakens the nonlinearity in the sense that the dispersion mechanism disperses large gradients as they begin to form, keeping energy in the lower modes where it increased by the low-mode instability in the KSE more than it is decreased by the hyperdiffusion.

Thus, the reduced system \eqref{KSEr} we propose in this work is of interest in the sense that its solution also has no maximum principle at all, while we point out that it has maximum principle in $L^{\infty}$-norm if $\sigma = 0$. Moreover, it does not sufficiently weaken the nonlinearity to prevent \eqref{buuu_not_zero} by, e.g., enforcing a divergence-free condition, but relies only on one-dimensional symmetries of the form \eqref{buuu_1D_zero}, present in all equations of the form \eqref{abstract_eqn}.  As we prove below, having only an exponential growth bound of a supremum norm of $u_1$ in Proposition \ref{exponential growth} is enough to tame nonlinearity sufficiently to obtain global well-posedness.


\section{Preliminaries}\label{secPre}
\noindent
We write $A \lesssim_{\alpha,\beta} B, A \approx_{\alpha, \beta} B$, etc.,  whenever there exists a constant $c = c(\alpha, \beta)$ such that $A\leq cB, A = cB$, respectively. For brevity we also write $\int \textbf{f} \triangleq \int_{\mathbb{T}^{2}} \textbf{f}(\textbf{x}) d\textbf{x}$ as well as $\partial_{j} \triangleq \frac{\partial}{\partial x_{j}}$ for $j \in \{1,2\}$. 
The standard $L^2$ inner-product is denoted by $(\cdot,\cdot)$.
We recall that, due to the periodic boundary condtions, for $\textbf{f}$ in suitable spaces, we may write 
\begin{equation*}
\textbf{f}(\textbf{x}) = \sum_{\textbf{k} \in \mathbb{Z}^{2}} \hat{\textbf{f}}(\textbf{k}) e^{i\textbf{k}\cdot \textbf{x}}, 
\hspace{3mm}
\lVert \textbf{f} \rVert_{L^{2}} \triangleq (\sum_{\textbf{k} \in \mathbb{Z}^{2}} \lvert \hat{\textbf{f}} (\textbf{k})  \rvert^{2} )^{\frac{1}{2}},
\end{equation*} 
and the inhomogeneous and homogeneous Sobolev norms 
\begin{equation*}
\lVert \textbf{f} \rVert_{H^{s}} \triangleq (\sum_{\textbf{k} \in \mathbb{Z}^{2}} (1+ \lvert \textbf{k} \rvert^{s} )^2 \lvert \hat{\textbf{f}} (\textbf{k}) \rvert^{2} )^{\frac{1}{2}}, \hspace{3mm} \lVert \textbf{f} \rVert_{\dot{H}^{s}} \triangleq (\sum_{\textbf{k} \in \mathbb{Z}^{2}} \lvert \textbf{k} \rvert^{2s} \lvert \hat{\textbf{f}}(\textbf{k}) \rvert^{2})^{\frac{1}{2}},
\end{equation*} 
respectively. Consequently $\lVert \textbf{f} \rVert_{H^{s}} \approx \lVert \textbf{f} \rVert_{\dot{H}^{s}} + \lVert \textbf{f} \rVert_{L^{2}}$. We denote by $\Lambda^{s} \triangleq (-\Delta)^{\frac{s}{2}}$ which is defined by its Fourier transform as $\widehat{\Lambda^{s} \textbf{f}}(\textbf{k}) = \lvert \textbf{k} \rvert^{s} \hat{\textbf{f}}(\textbf{k})$.   

We recall the Picard-Lindel\"of Theorem in Banach spaces, a proof of which can be found in, e.g., \cite[Theorem 3.1]{Majda_Bertozzi_2002}.
\begin{lemma}
\rm(Picard-Lindel\"of) 
Let $O \subset B$ be an open subset of a Banach space $B$ and let $F: O \mapsto B$ be a mapping that satisfies the following conditions 
\begin{enumerate}
\item $F(X)$ maps $O$ to $B$; 
\item $F$ is locally Lipschitz; i.e., for any $X \in O$ there exists $L > 0$ and an open neighborhood $U$ of $X$ in $O$ such that 
\begin{equation*}
\lVert F(\tilde{X}) - F(\overline{X}) \rVert_{B} \leq L \lVert \tilde{X} - \overline{X} \rVert_{B} 
\end{equation*} 
for all $\tilde{X}, \overline{X} \in U$. 
\end{enumerate} 
Then for any $X_{0} \in O$, there exists a time $T > 0$ such that 
\begin{equation*}
\partial_{t}X = F(X), \hspace{5mm} X \rvert_{t=0} = X_{0} \in O,
\end{equation*} 
has a unique solution $X \in C^{1}((-T, T); O)$. 
\end{lemma}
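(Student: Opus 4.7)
The plan is to recast the initial value problem as a fixed-point problem and apply the Banach contraction mapping theorem in a suitable space of continuous curves. Specifically, a function $X \in C([-T,T]; B)$ solves $\partial_t X = F(X)$ with $X(0) = X_0$ if and only if it satisfies the integral equation
\begin{equation*}
X(t) = X_0 + \int_0^t F(X(s))\, ds,
\end{equation*}
where the integral is interpreted in the Bochner sense. The integral formulation trades a differential condition (which requires some a priori regularity) for a purely metric one, which is tailor-made for the Lipschitz hypothesis.

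First I would exploit openness of $O$ to fix some $r > 0$ such that the closed ball $\overline{B_r(X_0)} \subset U$, where $U$ is the neighborhood on which $F$ is $L$-Lipschitz. Local Lipschitz continuity combined with $\|F(X)\|_B \leq \|F(X_0)\|_B + L\|X - X_0\|_B$ then yields a uniform bound $M := \sup_{X \in \overline{B_r(X_0)}} \|F(X)\|_B < \infty$. I would then consider the closed subset
\begin{equation*}
\mathcal{S} := \bigl\{ X \in C([-T,T]; B) : X(t) \in \overline{B_r(X_0)} \text{ for all } t \in [-T,T]\bigr\}
\end{equation*}
of the Banach space $C([-T,T]; B)$, equipped with the sup norm, and define $\Phi : \mathcal{S} \to C([-T,T]; B)$ by $\Phi(X)(t) := X_0 + \int_0^t F(X(s))\, ds$.

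The next step is to choose $T > 0$ small enough that $\Phi$ stabilizes $\mathcal{S}$ and is a strict contraction on it. Directly estimating,
\begin{equation*}
\|\Phi(X)(t) - X_0\|_B \leq |t| \cdot M \leq TM,
\end{equation*}
and, for $X, Y \in \mathcal{S}$,
\begin{equation*}
\|\Phi(X)(t) - \Phi(Y)(t)\|_B \leq \int_0^{|t|} L \|X(s) - Y(s)\|_B\, ds \leq TL \|X - Y\|_{C([-T,T];B)}.
\end{equation*}
Choosing $T < \min(r/M,\, 1/(2L))$ makes $\Phi(\mathcal{S}) \subset \mathcal{S}$ and $\Phi$ a $\tfrac12$-contraction. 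The Banach fixed-point theorem then produces a unique $X \in \mathcal{S}$ with $\Phi(X) = X$, which is the desired solution. Uniqueness in $\mathcal{S}$ upgrades to uniqueness in $C^1((-T,T); O)$ by a standard continuity-in-time argument (any other solution lies in $\mathcal{S}$ on a small interval about $0$).

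Finally, $C^1$ regularity comes for free from the integral equation: since $s \mapsto F(X(s))$ is continuous (composition of continuous maps), the fundamental theorem of calculus for Bochner integrals gives $\partial_t X(t) = F(X(t))$, which is continuous in $t$. The main obstacle, such as it is, is purely bookkeeping: one must simultaneously satisfy $TM < r$ (to keep iterates inside $U$, where the Lipschitz estimate is valid) and $TL < 1$ (to obtain contraction), and verify that $\mathcal{S}$ is closed in $C([-T,T];B)$ so the Banach fixed-point theorem genuinely applies.
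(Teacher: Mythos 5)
Your proof is correct and is essentially the standard contraction-mapping argument; the paper does not prove this lemma itself but cites \cite[Theorem 3.1]{Majda_Bertozzi_2002}, whose proof is exactly this reformulation as the integral equation $X(t)=X_0+\int_0^t F(X(s))\,ds$ followed by the Banach fixed-point theorem on a closed ball of $C([-T,T];B)$ with $T$ chosen small enough to ensure both invariance and contraction. The only point worth tightening is the final uniqueness upgrade from $\mathcal{S}$ to all of $C^1((-T,T);O)$, which should be carried out via a connectedness (open-and-closed) argument or Gr\"onwall, but this is routine.
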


We also recall the Aubin-Lions-Simon Compactness Theorem, a proof of which can be found in, e.g., 
\cite[Theorem 5]{Simon_1987} (see also \cite[Lemma 4]{Simon_1990}).
\begin{lemma}
\rm(Aubin-Lions-Simon) 
Assume that $X, B, Y$ are all Banach spaces such that $X \subset B \subset Y$, where $X \hookrightarrow B$ compactly. Suppose $1 \leq p \leq \infty$, 
\begin{enumerate}
\item $F \triangleq \{f_{n}\}_{n}$ is bounded in $L^{p}([0, T]; X)$, 
\item $\frac{\partial F}{\partial t} \triangleq \{ \frac{\partial f_{n}}{\partial t} \}_{f_{n} \in F}$ is bounded in $L^{1}([0, T]; Y)$. 
\end{enumerate} 
Then $F$ is relatively compact in $L^{p} ([0, T]; B)$ and in $C([0, T]; B)$ if $p = \infty$. 
\end{lemma}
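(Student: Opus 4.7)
The plan is to combine an Ehrling-type interpolation inequality with a Kolmogorov--Riesz--Fréchet compactness argument in $L^p([0,T];B)$. First I would establish that, since $X \hookrightarrow B$ compactly and $B \hookrightarrow Y$ continuously, for every $\varepsilon>0$ there exists $C_\varepsilon>0$ with
\begin{equation*}
\normV{f}_B \leq \varepsilon \normV{f}_X + C_\varepsilon \normV{f}_Y \quad \text{for all } f \in X.
\end{equation*}
This is standard, proved by contradiction: otherwise one obtains a sequence $f_n$ with $\normV{f_n}_B = 1$ and $\normV{f_n}_X$ bounded but $\normV{f_n}_Y \to 0$, and compactness of $X \hookrightarrow B$ together with $B \hookrightarrow Y$ yields a limit that is simultaneously of unit $B$-norm and zero, a contradiction. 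This inequality lets one transfer any smallness bound from $Y$ to $B$, provided one controls the $X$-norm.

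Next, for $1 \leq p < \infty$, I would verify the two hypotheses of the Kolmogorov--Riesz--Fréchet compactness criterion for $\{f_n\}$ viewed as a subset of $L^p([0,T];B)$: a uniform bound and equi-continuity under time translations. Boundedness in $L^p([0,T];B)$ is immediate from the assumed $L^p([0,T];X)$ bound and the continuous embedding $X \hookrightarrow B$. For equi-continuity, I would use
\begin{equation*}
\normV{f_n(t+h) - f_n(t)}_Y \leq \int_t^{t+h} \normV{\partial_t f_n(s)}_Y \, ds,
\end{equation*}
and Fubini to obtain $\int_0^{T-h}\normV{f_n(t+h)-f_n(t)}_Y\, dt \to 0$ uniformly in $n$ as $h \to 0$. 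Applying the Ehrling inequality pointwise in $t$, raising to the $p$-th power, integrating, and using the uniform $L^p([0,T];X)$ bound to absorb the $X$-piece then yields $\normV{\tau_h f_n - f_n}_{L^p([0,T-h];B)} \to 0$ uniformly in $n$ once $\varepsilon$ is chosen small. The Kolmogorov--Riesz--Fréchet theorem then delivers relative compactness in $L^p([0,T];B)$.

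For the case $p = \infty$ I would instead argue via Arzelà--Ascoli in $C([0,T];B)$. The time-derivative bound exhibits each $f_n$ as an absolutely continuous map into $Y$ with a uniform modulus of continuity from the $L^1([0,T];Y)$ estimate, and combining with the $L^\infty([0,T];X)$ bound through the Ehrling inequality upgrades this to equi-continuity in $B$. Pointwise pre-compactness in $B$ at each fixed $t$ follows from the compact embedding applied to the bounded subset $\{f_n(t)\} \subset X$, and a diagonal-subsequence extraction on a countable dense set of times, combined with equi-continuity, produces a subsequence convergent in $C([0,T];B)$. The main obstacle, as I see it, is packaging the Ehrling step carefully enough that the $\varepsilon$-loss in the $X$-norm can be absorbed by the given uniform bounds without spoiling either the equi-continuity modulus or the $p$-dependent integrability; this is the delicate moduli-of-continuity bookkeeping at the heart of the classical Aubin--Lions--Simon proof.
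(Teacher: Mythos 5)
The paper does not actually prove this lemma; it is quoted with a citation to Simon [Theorem 5], so the only meaningful baseline is Simon's own argument, which your outline follows in broad strokes (Ehrling interpolation, a translation-based compactness criterion in $L^p([0,T];B)$, Arzel\`a--Ascoli for the uniform topology). However, two of your steps are stated in a form that would fail as written. First, for $1\le p<\infty$ the Kolmogorov--Riesz--Fr\'echet criterion in the Bochner space $L^p([0,T];B)$ with $B$ infinite-dimensional is \emph{not} ``uniform bound plus translation equicontinuity'': a bounded subset of $B$ is not relatively compact, so the criterion additionally requires that the time averages $\int_{t_1}^{t_2}f_n(t)\,dt$ (for each fixed $0<t_1<t_2<T$) form a relatively compact subset of $B$. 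This is where the compact embedding $X\hookrightarrow B$ must be used a second time --- the averages are bounded in $X$ by the $L^p([0,T];X)$ hypothesis --- and your plan for the $p<\infty$ case omits this condition entirely, invoking compactness of $X\hookrightarrow B$ only inside the Ehrling inequality. The omission is easily repaired, but as stated the criterion you verify does not imply compactness.

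Second, and more seriously, in the $p=\infty$ case your claim that the $L^1([0,T];Y)$ bound on $\partial_t f_n$ yields ``a uniform modulus of continuity'' into $Y$ is false: boundedness in $L^1$ does not give equi-integrability. Take $f_n(t)=\phi(n(t-T/2))\,v$ with $v\in X\setminus\{0\}$ and $\phi$ a fixed compactly supported bump with $\phi(0)=1$; then $f_n$ is bounded in $L^\infty([0,T];X)$ and $\partial_t f_n$ is bounded in $L^1([0,T];Y)$, yet $f_n(T/2)=v$ while $f_n(t)\to 0$ for every $t\ne T/2$, so no subsequence converges in $C([0,T];B)$. Hence the equicontinuity step cannot be extracted from an $L^1$ bound alone; Simon's theorem for the $C([0,T];B)$ conclusion in fact assumes $\partial_t F$ bounded in $L^r([0,T];Y)$ for some $r>1$, which restores a uniform modulus via H\"older's inequality, $\int_s^t\lVert\partial_\tau f_n\rVert_Y\,d\tau\le\lVert\partial_t f_n\rVert_{L^r}\,|t-s|^{1-1/r}$, after which your Ehrling/Arzel\`a--Ascoli scheme goes through. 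In the paper's application this is harmless, since the time derivatives are bounded in $L^2([0,T];H^{-1})$ by (4.22)--(4.23), but as a proof of the lemma exactly as stated the $p=\infty$ branch has a genuine gap.
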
 

 \section{Global Well-Posedness}\label{secGWP}
\noindent

We first write down the definition of a strong solution to the r-KSE (\ref{KSEr}). 
\begin{definition}
We call $\textbf{u} = (u_{1}, u_{2})$ a strong solution to (\ref{KSEr}) over a time interval $[0,T]$ if for any $\phi= (\phi_{1}, \phi_{2}) \in C^{\infty}(\mathbb{T}^{2})$, 
\begin{subequations}
\begin{align} 
(\partial_{t} u_{1}, \phi_{1}) + \nu (\nabla u_{1}, \nabla \phi_{1}) + (\textbf{u}\cdot\nabla u_{1}, \phi_{1}) = \sigma (u_{1}, \phi_{1}), \\
(\partial_{t} u_{2}, \phi_{2}) + (\Delta u_{2}, \Delta \phi_{2}) + (\textbf{u}\cdot\nabla u_{2}, \phi_{2}) = \lambda (\nabla u_{2}, \nabla \phi_{2}) 
\end{align} 
\end{subequations}  
for almost all $t \in [0,T]$, and 
\begin{subequations}
\begin{align}
& \bu \in L^{\infty} ([0, T]; H^{1}(\mathbb{T}^{2})), u_{1} \in L^{2}([0, T]; H^{2}(\mathbb{T}^{2})), u_{2} \in L^{2}([0, T]; H^{3}(\mathbb{T}^{2})),\\
& \bu \in C([0,T]; H^{s}(\mathbb{T}^{2})) \text{ for any } s \in [0,1), \partial_{t} \textbf{u} \in L^{2}([0, T]; H^{-1}(\mathbb{T}^{2})). 
\end{align}
\end{subequations} 
\end{definition} 

\begin{theorem}
Given any initial data $\textbf{u}^{in} \in H^{1}(\mathbb{T}^{2})$ such that $u_{1}^{in} \in L^{\infty}(\mathbb{T}^{2})$ and any $T > 0$, there exists a unique strong solution to (\ref{KSEr}) over $[0,T]$. 
\end{theorem}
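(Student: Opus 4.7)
My plan is to construct a strong solution via a standard approximation scheme (Galerkin projection onto Fourier modes, solved locally by Picard--Lindel\"of, Lemma~2.1), establish uniform a priori estimates matching the regularity in the definition, pass to the limit using Aubin--Lions--Simon (Lemma~2.2), and finally prove uniqueness by a difference argument. I describe the a priori bounds below assuming sufficient smoothness, since the manipulations transfer to the approximants up to routine modifications.

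The central structural feature is that the $u_{1}$-equation is a viscous Burgers-type equation in $u_{1}$ with advection by $\bu=(u_{1},u_{2})$, so it obeys a maximum principle $\|u_{1}(t)\|_{L^{\infty}} \le \|u_{1}^{in}\|_{L^{\infty}}$. With this bound in hand, an $L^{2}$-estimate for $u_{2}$ closes: testing the $u_{2}$-equation against $u_{2}$, the Burgers-type piece $\int u_{2}^{2}\partial_{2}u_{2}\,d\bx$ of the nonlinearity vanishes by integration by parts, and the residual $-\tfrac{1}{2}\int (\partial_{1}u_{1})u_{2}^{2}\,d\bx$ is bounded by $\|u_{1}\|_{L^{\infty}}\|u_{2}\|_{L^{2}}\|\nabla u_{2}\|_{L^{2}}$. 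This term, together with the $\lambda\|\nabla u_{2}\|_{L^{2}}^{2}$ arising from the destabilizing Laplacian, is absorbed into $\|\Delta u_{2}\|_{L^{2}}^{2}$ via the 2D interpolation $\|\nabla u_{2}\|_{L^{2}}^{2} \le C\|u_{2}\|_{L^{2}}\|\Delta u_{2}\|_{L^{2}}$ and Young's inequality. Gr\"onwall then gives $u_{2} \in L^{\infty}L^{2} \cap L^{2}H^{2}$; a parallel, simpler argument for $u_{1}$ yields $u_{1} \in L^{\infty}L^{2} \cap L^{2}H^{1}$.

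The hardest step will be the $H^{1}$-estimate for $\bu$. Testing the $u_{2}$-equation against $-\Delta u_{2}$ produces $\tfrac{1}{2}\tfrac{d}{dt}\|\nabla u_{2}\|_{L^{2}}^{2} + \|\nabla \Delta u_{2}\|_{L^{2}}^{2} = \lambda\|\Delta u_{2}\|_{L^{2}}^{2} + \int (\bu\cdot\nabla u_{2})\Delta u_{2}\,d\bx$. The linear term is absorbed by interpolation, and the cross-term $\int u_{1}\partial_{1}u_{2}\Delta u_{2}\,d\bx$ is controlled by $\|u_{1}\|_{L^{\infty}}\|\nabla u_{2}\|_{L^{2}}\|\Delta u_{2}\|_{L^{2}}$ via Young. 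The Burgers-type term $\int u_{2}\partial_{2}u_{2}\Delta u_{2}\,d\bx$, after integration by parts, equals $-\tfrac{1}{2}\int |\nabla u_{2}|^{2}\partial_{2}u_{2}\,d\bx$, which by 2D Gagliardo--Nirenberg (giving $\|\nabla u_{2}\|_{L^{3}}^{3} \le C\|\nabla u_{2}\|_{L^{2}}^{2}\|\Delta u_{2}\|_{L^{2}}$) is bounded by $C\|\nabla u_{2}\|_{L^{2}}^{2}\|\Delta u_{2}\|_{L^{2}}$. Rather than applying Young (which would produce super-quadratic growth in $\|\nabla u_{2}\|_{L^{2}}^{2}$ and close Gr\"onwall only for short times), the crucial step---in the spirit of Pooley--Robinson---is to keep $\|\Delta u_{2}\|_{L^{2}}$ intact and treat it as a time-dependent Gr\"onwall coefficient; since $u_{2} \in L^{2}H^{2}$ from the previous step, $\|\Delta u_{2}\|_{L^{2}}$ lies in $L^{2}([0,T]) \subset L^{1}([0,T])$, and Gr\"onwall closes uniformly on $[0,T]$. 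The LHS dissipation then delivers $u_{2} \in L^{2}H^{3}$. An analogous $H^{1}$-estimate for $u_{1}$---whose nonlinear cross-term $\int u_{2}\partial_{2}u_{1}\Delta u_{1}\,d\bx$ is handled via Gagliardo--Nirenberg and absorbed into $\nu\|\Delta u_{1}\|_{L^{2}}^{2}$, using $\|\nabla u_{2}\|_{L^{2}}^{2} \in L^{1}([0,T])$ as a Gr\"onwall coefficient---yields $u_{1} \in L^{\infty}H^{1} \cap L^{2}H^{2}$.

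With uniform bounds and $\partial_{t}\bu \in L^{2}H^{-1}$ read off the equations, Aubin--Lions--Simon provides a subsequence converging strongly in $L^{2}H^{s}$ for $s$ below the regularity thresholds, enabling passage to the limit in the weak formulation; a standard Lions--Magenes argument gives continuity in $H^{s}$ for $s < 1$. For uniqueness, I subtract two strong solutions $\bu,\bv$, set $\bw = \bu - \bv$, and perform an $L^{2}$-estimate on $\bw$ using the decomposition $(\bu\cdot\nabla)\bu - (\bv\cdot\nabla)\bv = (\bw\cdot\nabla)\bu + (\bv\cdot\nabla)\bw$, controlling the resulting terms via the $L^{\infty}$-bound on $u_{1},v_{1}$, the $L^{\infty}H^{1}$-bounds on both solutions, and Gagliardo--Nirenberg; Gr\"onwall then forces $\bw \equiv 0$.
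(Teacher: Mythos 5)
Your proposal is correct and follows essentially the same route as the paper: Galerkin approximation with Picard--Lindel\"of for local existence, Aubin--Lions--Simon to pass to the limit, uniqueness by an $L^{2}$ difference estimate, and global extension via the maximum principle for $u_{1}$, then the $L^{\infty}L^{2}\cap L^{2}H^{2}$ bound on $u_{2}$, then the $L^{2}H^{1}$ bound on $u_{1}$, and finally an $H^{1}$ estimate closed by Gr\"onwall with coefficients that are integrable in time thanks to the earlier bounds (your choice of $\|\Delta u_{2}\|_{L^{2}}\in L^{1}([0,T])$ versus the paper's $(1+\|\nabla\bu\|_{L^{2}}^{2})\in L^{1}([0,T])$ is an immaterial variation). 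One caution on your remark that the manipulations ``transfer to the approximants'': the maximum principle does not survive the Galerkin projection $P_{n}$, so, as in the paper, it must be proved for the limit solution and combined with the $H^{1}$ continuation criterion from the local theory rather than carried out at the truncated level.
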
 

\begin{remark}
By symmetry, if the roles of $u_{1}$ and $u_{2}$ are reversed in (\ref{KSEr}), the analogous theorem clearly holds. 
\end{remark}

\section{Proof of Theorem 3.2}
\noindent
We consider a Galerkin approximation with $P_{n}$ being the projection onto the Fourier modes of order up to $n \in \mathbb{N} \cup \{0\}$: 
\begin{equation*}
P_{n}\textbf{u}(\textbf{x}) \triangleq  \sum_{\lvert \textbf{k} \rvert \leq n} \hat{\textbf{u}}(\textbf{k}) e^{i\textbf{x} \cdot \textbf{k}}. 
\end{equation*}
We let $\textbf{u}^{n} \triangleq P_{n}\bu \triangleq (P_{n} u_{1}, P_{n} u_{2}) $ and consider the following Galerkin-truncated system. 
\begin{subequations}\label{4.1}
\begin{align}
& \partial_{t} u_{1}^{n} + P_{n} ((\bu^{n} \cdot \nabla) u_{1}^{n}) = \sigma u_{1}^{n} + \nu \Delta u_{1}^{n}, \label{4.1a}\\
& \partial_{t} u_{2}^{n} + P_{n} ((\bu^{n} \cdot \nabla) u_{2}^{n}) = -\lambda \Delta u_{2}^{n} - \Delta^{2} u_{2}^{n}, \label{4.1b} \\
& \bu^{n}(\cdot, 0) \triangleq P_{n} \bu^{in}(\cdot) = P_{n} (u_{1}^{in}, u_{2}^{in}) = (u_{1}^{n}, u_{2}^{n})(0).  \label{4.1c}
\end{align}
\end{subequations} 
\begin{proposition}
Given initial data $\bu^{in} \in H^{1}(\mathbb{T}^{2})$, there exists $T = T(\lVert \textbf{u}^{in} \rVert_{H^{1}}) > 0$ such that the Galerkin approximation system (\ref{4.1}) has a solution $\textbf{u}^{n} \in L^{\infty} ([0,T]; H^{1}(\mathbb{T}^{2}))$ that satisfies $u_{1}^{n} \in L^{2}([0,T]; H^{2}(\mathbb{T}^{2}))$ and $u_{2}^{n} \in L^{2}([0,T]; H^{3}(\mathbb{T}^{2}))$; moreover, such bounds are independent of $n$. Additionally, $\partial_{t} u_{1}^{n} \in L^{2}([0,T]; L^{2}(\mathbb{T}^{2}))$ and $\partial_{t} u_{2}^{n} \in L^{2}([0,T]; H^{-1}(\mathbb{T}^{2}))$. Finally, if $T^{\ast}$ is the maximal existence time and $T^{\ast} < \infty$, then 
\begin{equation*}
\limsup_{t\to T^{\ast}} \lVert \bu^{n}(t) \rVert_{H^{1}} = + \infty.
\end{equation*} 
\end{proposition}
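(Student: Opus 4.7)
The plan is to treat \eqref{4.1} as an ODE on the finite-dimensional space $P_{n}L^{2}(\mathbb{T}^{2})$, invoke Picard--Lindel\"of for local-in-time existence, and then close $n$-independent $H^{1}$ energy estimates to carve out a uniform existence interval. The right-hand side of \eqref{4.1} is the sum of bounded linear operators on $P_{n}L^{2}$ (the Laplacian and bi-Laplacian) and the bilinear map $\bv\mapsto P_{n}((\bv\cdot\nabla)\bv)$, which is polynomial in the Fourier coefficients and hence smooth. Lemma 2.1 therefore produces a unique $C^{1}$ solution $\bu^{n}$ on a maximal interval $[0,T_{n}^{\ast})$; since all norms on $P_{n}L^{2}$ are equivalent, the Picard--Lindel\"of blow-up alternative immediately yields the criterion $T_{n}^{\ast}<\infty \Rightarrow \limsup_{t\to T_{n}^{\ast}}\lVert\bu^{n}(t)\rVert_{H^{1}}=\infty$.

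For the uniform-in-$n$ bounds I pair \eqref{4.1a} in $L^{2}$ with $u_{1}^{n}-\Delta u_{1}^{n}$ and \eqref{4.1b} with $u_{2}^{n}-\Delta u_{2}^{n}$; since $P_{n}$ is self-adjoint and commutes with $\Delta$, the linear parts yield, after integration by parts and absorbing the destabilizing $\lambda\lVert u_{2}^{n}\rVert_{\dot{H}^{2}}^{2}$ via $\lVert u_{2}^{n}\rVert_{\dot{H}^{2}}^{2}\leq\lVert u_{2}^{n}\rVert_{\dot{H}^{1}}\lVert u_{2}^{n}\rVert_{\dot{H}^{3}}$ and Young, an inequality of the form
\begin{equation*}
\tfrac{d}{dt}\lVert\bu^{n}\rVert_{H^{1}}^{2}+\nu\lVert u_{1}^{n}\rVert_{\dot{H}^{2}}^{2}+\lVert u_{2}^{n}\rVert_{\dot{H}^{3}}^{2}\lesssim(1+\lambda^{2})\lVert\bu^{n}\rVert_{H^{1}}^{2}+\mathcal{N}(\bu^{n}),
\end{equation*}
where $\mathcal{N}(\bu^{n})$ collects the four surviving trilinear forms $\pm\int(\bu^{n}\cdot\nabla)u_{i}^{n}\,w$ with $w\in\{u_{i}^{n},\Delta u_{i}^{n}\}$. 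Each is estimated by H\"older together with the 2D Gagliardo--Nirenberg inequality $\lVert f\rVert_{L^{4}}^{2}\lesssim\lVert f\rVert_{L^{2}}\lVert\nabla f\rVert_{L^{2}}$, producing expressions of the form $\varepsilon\cdot(\text{highest-order dissipation})+C_{\varepsilon}(1+\lVert\bu^{n}\rVert_{H^{1}}^{2})^{\alpha}$ for some $\alpha>1$. Choosing $\varepsilon$ sufficiently small, one obtains the scalar ODI $\tfrac{d}{dt}(1+\lVert\bu^{n}\rVert_{H^{1}}^{2})\leq C(1+\lVert\bu^{n}\rVert_{H^{1}}^{2})^{\alpha}$, whose majorant remains finite on some interval $[0,T]$ with $T=T(\lVert\bu^{in}\rVert_{H^{1}})>0$ independent of $n$. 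Integrating the retained dissipative quantities then delivers the claimed $u_{1}^{n}\in L^{2}([0,T];H^{2})$ and $u_{2}^{n}\in L^{2}([0,T];H^{3})$ bounds, and the time-derivative bounds follow by reading $\partial_{t}u_{i}^{n}$ directly off \eqref{4.1}: the nonlinear term $(\bu^{n}\cdot\nabla)u_{1}^{n}$ is controlled in $L^{2}([0,T];L^{2})$ by $\lVert\bu^{n}\rVert_{H^{1}}^{3/2}\lVert u_{1}^{n}\rVert_{H^{2}}^{1/2}$ via Gagliardo--Nirenberg, while for $u_{2}^{n}$ the bi-Laplacian forces the weaker statement $\partial_{t}u_{2}^{n}\in L^{2}([0,T];H^{-1})$.

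The main obstacle is closing the $H^{1}$ estimate in the face of the non-vanishing trilinear pieces $-\int(\bu^{n}\cdot\nabla)u_{i}^{n}\,u_{i}^{n}$, which do not cancel under integration by parts because $\bu^{n}$ is not divergence-free (the obstruction flagged in \eqref{buuu_not_zero}). The scheme above succeeds only because \emph{both} dissipations are present, the standard $-\nu\Delta u_{1}^{n}$ and the hyperdiffusion $-\Delta^{2}u_{2}^{n}$, together providing enough $H^{2}$- and $\dot{H}^{3}$-control in two dimensions to absorb every trilinear contribution via Gagliardo--Nirenberg. Because the resulting majorant is superlinear, this step yields only local-in-time existence; extending past the Picard window is the task of the subsequent global arguments, which invoke the $L^{\infty}$-hypothesis on $u_{1}^{in}$.
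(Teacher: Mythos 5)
Your proposal is correct and follows essentially the same route as the paper: Picard--Lindel\"of on the finite-dimensional Galerkin system for local existence and the continuation criterion, followed by an $H^{1}$ (i.e.\ combined $L^{2}$ and $\dot{H}^{1}$) energy estimate closed via H\"older, Ladyzhenskaya/Gagliardo--Nirenberg, the interpolation $\lVert u_{2}^{n}\rVert_{\dot{H}^{2}}^{2}\leq\lVert u_{2}^{n}\rVert_{\dot{H}^{1}}\lVert u_{2}^{n}\rVert_{\dot{H}^{3}}$, and Young, yielding a superlinear ODI and hence an $n$-independent existence time, with the $L^{2}_{t}H^{2}$, $L^{2}_{t}H^{3}$, and time-derivative bounds read off afterwards exactly as in the paper. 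The only cosmetic difference is that you justify local solvability by noting the vector field is polynomial on the finite-dimensional space $P_{n}L^{2}$, whereas the paper verifies the local Lipschitz property of $F_{n}$ on $H^{1}$ by hand; both are valid instances of the same lemma.
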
 

\begin{proof} 
We rely on Lemma 2.1. In order to do so we define 
\begin{equation}\label{4.2}
F_{n} (\bu^{n}) \triangleq 
\begin{pmatrix}
F_{n,1} (\bu^{n})\\
F_{n,2}(\bu^{n})
\end{pmatrix}
\triangleq
\begin{pmatrix}
-P_{n} ((\bu^{n} \cdot \nabla) u_{1}^{n}) + \sigma u_{1}^{n} + \nu \Delta u_{1}^{n} \\
-P_{n} ((\bu^{n} \cdot \nabla) u_{2}^{n}) - \lambda \Delta u_{2}^{n} - \Delta^{2} u_{2}^{n} 
\end{pmatrix}.
\end{equation} 
In the estimates below, we make use of the following elementary facts: 
\begin{subequations}
\begin{align}
& \widehat{P_{n} \Lambda^{s} \textbf{f}}(\textbf{k}) = \widehat{ \Lambda^{s} P_{n}\textbf{f}}(\textbf{k}) \text{ so that } P_{n} \Lambda^{s} = \Lambda^{s} P_{n}, \label{4.3a}\\
&\lVert P_{n} \textbf{f} \rVert_{\dot{H}^{s}} \leq n^{s} \lVert \textbf{f} \rVert_{L^{2}}, \label{4.3b}\\
& \lVert P_{n} \textbf{f} \rVert_{\dot{H}^{s}} \leq  \lVert \textbf{f} \rVert_{\dot{H}^{s}}, \label{4.3c} \\
& \int (P_{n} \textbf{f})\cdot\textbf{g} = \int (P_{n}\textbf{f})\cdot(P_{n} \textbf{g}) = \int \textbf{f}\cdot (P_{n} \textbf{g}). \label{4.3d}
\end{align}
\end{subequations} 
Firstly, for $\textbf{f}^{n} \triangleq (f_{1}^{n}, f_{2}^{n}), \textbf{g}^{n} \triangleq (g_{1}^{n}, g_{2}^{n})$, we compute 
\begin{equation}\label{4.4}
\begin{split}
& \lVert F_{n,1} (\textbf{f}^{n}) - F_{n,1} (\textbf{g}^{n}) \rVert_{H^{1}}\\
=& \lVert -P_{n}( (\textbf{f}^{n} - \textbf{g}^{n}) \cdot \nabla f_{1}^{n}) - P_{n} (\textbf{g}^{n} \cdot \nabla (f_{1}^{n} - g_{1}^{n})) + \nu \Delta (f_{1}^{n} - g_{1}^{n}) + \sigma (f_{1}^{n} - g_{1}^{n}) \rVert_{H^{1}}\\
\lesssim& \lVert \textbf{f}^{n} - \textbf{g}^{n} \rVert_{L^{2}} \lVert \nabla f_{1}^{n} \rVert_{L^{\infty}} + \lVert \textbf{g}^{n} \rVert_{L^{\infty}} \lVert \nabla (f_{1}^{n} - g_{1}^{n}) \rVert_{L^{2}} + \nu n^{2} \lVert f_{1}^{n} - g_{1}^{n} \rVert_{L^{2}} + \sigma \lVert f_{1}^{n} - g_{1}^{n} \rVert_{L^{2}} \\
&+ n[\lVert \textbf{f}^{n} - \textbf{g}^{n} \rVert_{L^{2}} \lVert \nabla f_{1}^{n} \rVert_{L^{\infty}} + \lVert \textbf{g}^{n} \rVert_{L^{\infty}} \lVert \nabla (f_{1}^{n} - g_{1}^{n}) \rVert_{L^{2}} + \nu n^{2} \lVert f_{1}^{n} - g_{1}^{n} \rVert_{L^{2}} + \sigma \lVert f_{1}^{n} - g_{1}^{n} \rVert_{L^{2}}]\\
\lesssim& (1+n) [\lVert \textbf{f}^{n} - \textbf{g}^{n} \rVert_{L^{2}} \lVert f_{1}^{n} \rVert_{H^{3}} + \lVert \textbf{g}^{n} \rVert_{H^{2}} n \lVert f_{1}^{n} - g_{1}^{n} \rVert_{L^{2}} + \nu n^{2} \lVert f_{1}^{n} - g_{1}^{n} \rVert_{L^{2}} + \sigma \lVert f_{1}^{n} - g_{1}^{n} \rVert_{L^{2}}]\\
\lesssim&_{n, \lVert \textbf{f}^{n} \rVert_{L^{2}}, \lVert \textbf{g}^{n} \rVert_{L^{2}}} \lVert \textbf{f}^{n} - \textbf{g}^{n} \rVert_{L^{2}} 
\end{split}
\end{equation}
by (\ref{4.2}), H$\ddot{\mathrm{o}}$lder's inequality, the embedding of $H^{2}(\mathbb{T}^{2}) \hookrightarrow L^{\infty} (\mathbb{T}^{2})$, (\ref{4.3a}), (\ref{4.3b}) and (\ref{4.3c}).  
Secondly, we compute 
\begin{equation}\label{4.5} 
\begin{split}
& \lVert F_{n,2} (\textbf{f}^{n}) - F_{n,2}(\textbf{g}^{n}) \rVert_{H^{1}}\\
=& \lVert -P_{n}( ((\textbf{f}^{n} - \textbf{g}^{n}) \cdot \nabla ) f_{2}^{n}) - P_{n} ( (\textbf{g}^{n} \cdot \nabla) (f_{2}^{n} - g_{2}^{n}))\\
& \hspace{40mm} - \lambda \Delta (f_{2}^{n} - g_{2}^{n}) - \Delta^{2}(f_{2}^{n} - g_{2}^{n} ) \rVert_{H^{1}}\\
\lesssim& (1+n) [\lVert P_{n} (((\textbf{f}^{n} - \textbf{g}^{n}) \cdot \nabla) f_{2}^{n})\rVert_{L^{2}} + \lVert P_{n} ((\textbf{g}^{n} \cdot \nabla) (f_{2}^{n} - g_{2}^{n})) \rVert_{L^{2}}\\
& \hspace{50mm} + \lambda \lVert \Delta (f_{2}^{n} - g_{2}^{n} ) \rVert_{L^{2}} + \lVert \Delta^{2}(f_{2}^{n} - g_{2}^{n}) \rVert_{L^{2}}]\\
\lesssim& (1+n) [ \lVert \textbf{f}^{n} - \textbf{g}^{n} \rVert_{L^{2}} \lVert \nabla f_{2}^{n} \rVert_{L^{\infty}} + \lVert \textbf{g}^{n} \rVert_{L^{\infty}} \lVert \nabla (f_{2}^{n} - g_{2}^{n}) \rVert_{L^{2}}  + (\lambda n^{2} + n^{4}) \lVert \textbf{f}^{n} - \textbf{g}^{n} \rVert_{L^{2}}]\\
\lesssim& (1+n) [ \lVert \textbf{f}^{n} - \textbf{g}^{n} \rVert_{L^{2}} \lVert f_{2}^{n} \rVert_{H^{3}} + \lVert \textbf{g}^{n} \rVert_{H^{2}} n \lVert f_{2}^{n} - g_{2}^{n} \rVert_{L^{2}} + (\lambda n^{2} + n^{4}) \lVert \textbf{f}^{n} - \textbf{g}^{n} \rVert_{L^{2}}] \\
\lesssim&_{n, \lVert \textbf{f}^{n} \rVert_{L^{2}}, \lVert \textbf{g}^{n} \rVert_{L^{2}}} \lVert \textbf{f}^{n} - \textbf{g}^{n} \rVert_{L^{2}} 
\end{split}
\end{equation} 
by (\ref{4.2}), (\ref{4.3b}), (\ref{4.3c}), H$\ddot{\mathrm{o}}$lder's inequality and the embedding of $H^{2}(\mathbb{T}^{2}) \hookrightarrow L^{\infty}(\mathbb{T}^{2})$. 
Therefore, we conclude from (\ref{4.2}), (\ref{4.4}) and (\ref{4.5}) that 
\begin{equation}\label{4.6}
\lVert F_{n} (\textbf{f}^{n}) - F_{n}(\textbf{g}^{n}) \rVert_{H^{1}} \lesssim_{n, \lVert \textbf{f}^{n} \rVert_{L^{2}}, \lVert \textbf{g}^{n} \rVert_{L^{2}}} \lVert \textbf{f}^{n} - \textbf{g}^{n} \rVert_{H^{1}}. 
\end{equation} 
Thus, we see that $F_{n}$ is locally Lipschitz continuous in any open set $O^{M} \triangleq \{ \textbf{f} \in H^{1}(\mathbb{T}^{2}): \lVert \textbf{f} \rVert_{H^{1}} \leq M \}$. It is also clear that $F_{n}$ maps $O^{M}$ into $B = H^{1}$ by taking $\textbf{g}^{n} \equiv 0$ in (\ref{4.4}) and (\ref{4.5}). Thus, by Lemma 2.1, given $\textbf{u}^{in} \in H^{1}(\mathbb{T}^{2})$, there exists a unique solution 
\begin{equation}\label{4.7}
\textbf{u}^{n}\in C^{1} ([0, T_{n}), H^{1}(\mathbb{T}^{2}) \cap O^{M})^{2} 
\end{equation} 
for some $T_{n} > 0$. Now we take $L^{2}(\mathbb{T}^{2})$-inner products on (\ref{4.1a})-(\ref{4.1b}) with $(-\Delta u_{1}^{n}, -\Delta u_{2}^{n})$ to deduce 
\begin{equation}\label{4.8}
\begin{split}
& \frac{1}{2} \frac{d}{dt} (\lVert u_{1}^{n} \rVert_{\dot{H}^{1}}^{2} + \lVert u_{2}^{n} \rVert_{\dot{H}^{1}}^{2}) + \nu \lVert \Delta u_{1}^{n} \rVert_{L^{2}}^{2} + \lVert  u_{2}^{n} \rVert_{\dot{H}^{3}}^{2} \\
=& \int P_{n} ((\textbf{u}^{n} \cdot \nabla) u_{1}^{n}) \cdot \Delta u_{1}^{n} + \int P_{n} ((\bu^{n} \cdot \nabla) u_{2}^{n}) \cdot \Delta u_{2}^{n} - \sigma \int u_{1}^{n} \cdot \Delta u_{1}^{n}  + \lambda \lVert \Delta u_{2}^{n} \rVert_{L^{2}}^{2} \triangleq \sum_{i=1}^{4} I_{i}. 
\end{split}
\end{equation} 
As we pointed out in Remark 1.1, due to the lack of conserved quantity such as $L^{2}$-norm and the inaccessibility of Poincar$\acute{\mathrm{e}}$ inequality, this estimate alone will not work. Nevertheless, if we work on the $H^{1}(\mathbb{T}^{2})$-estimate instead of $\dot{H}^{1}(\mathbb{T}^{2})$-estimate, this difficulty may be overcome. For this purpose, we take $L^{2}$-inner products with $(u_{1}, u_{2})$ to obtain 
\begin{equation}\label{4.9} 
\begin{split}
& \frac{1}{2}  \frac{d}{dt} (\lVert u_{1}^{n} \rVert_{L^{2}}^{2} + \lVert u_{2}^{n} \rVert_{L^{2}}^{2}) + \nu \lVert \nabla u_{1}^{n} \rVert_{L^{2}}^{2} +\lVert \Delta u_{2}^{n} \rVert_{L^{2}}^{2} \\
=& - \int P_{n} ((\textbf{u}^{n} \cdot\nabla) u_{1}^{n}) \cdot u_{1}^{n} - \int P_{n} ((\bu^{n} \cdot \nabla) u_{2}^{n}) \cdot u_{2}^{n} + \sigma \lVert u_{1}^{n} \rVert_{L^{2}}^{2} + \lambda \lVert \nabla u_{2}^{n} \rVert_{L^{2}}^{2} \triangleq \sum_{i=1}^{4} II_{i}. 
\end{split}
\end{equation} 
We now start our estimates. Firstly, we compute 
\begin{equation}\label{4.10} 
\begin{split}
I_{1}\lesssim \lVert \bu^{n} \rVert_{L^{4}} \lVert \nabla u_{1}^{n} \rVert_{L^{4}} \lVert \Delta u_{1}^{n} \rVert_{L^{2}} 
\lesssim \lVert \bu^{n} \rVert_{H^{1}}^{\frac{3}{2}} \lVert \Delta u_{1}^{n} \rVert_{L^{2}}^{\frac{3}{2}}
\leq \frac{\nu}{2} \lVert \Delta u_{1}^{n} \rVert_{L^{2}}^{2} + c \lVert \bu^{n} \rVert_{H^{1}}^{6} 
\end{split}
\end{equation} 
where we used (\ref{4.8}), (\ref{4.3d}), (\ref{4.3a}), H$\ddot{\mathrm{o}}$lder's inequality, (\ref{4.3c}), the embedding of $H^{1}(\mathbb{T}^{2}) \hookrightarrow L^{4}(\mathbb{T}^{2})$, Gagliardo-Nirenberg inequality, and Young's inequality. Secondly, we compute 
\begin{equation}\label{4.11}
\begin{split}
I_{2} =& \int (\bu^{n} \cdot \nabla) u_{2}^{n} \cdot \Delta u_{2}^{n} \\
\leq& \lVert \bu^{n} \rVert_{L^{4}} \lVert \nabla u_{2}^{n} \rVert_{L^{4}} \lVert \Delta u_{2}^{n} \rVert_{L^{2}} \\
\lesssim& \lVert \bu^{n} \rVert_{H^{1}} \lVert u_{2}^{n} \rVert_{H^{1}}^{\frac{1}{2}} \lVert u_{2}^{n}  \rVert_{H^{2}}^{\frac{1}{2}} \lVert \Delta u_{2}^{n} \rVert_{L^{2}}
\end{split}
\end{equation} 
by (\ref{4.8}), H$\ddot{\mathrm{o}}$lder's inequality, (\ref{4.3c}), the embedding of $H^{1}(\mathbb{T}^{2}) \hookrightarrow L^{4}(\mathbb{T}^{2})$ and Gagliardo-Nirenberg inequality. 
Now it is clear that 
\begin{equation}\label{4.12}
\begin{split}
\lVert \Delta u_{2}^{n} \rVert_{L^{2}}^{2} = \sum_{\textbf{k} \in \mathbb{Z}^{2}} \lvert \textbf{k} \rvert^{4} \lvert \hat{u}_{2}^{n} \rvert^{2} \leq ( \sum_{\textbf{k} \in \mathbb{Z}^{2}} \lvert \textbf{k} \rvert^{2} \lvert \hat{u}_{2}^{n} \rvert^{2})^{\frac{1}{2}} (\sum_{\textbf{k} \in \mathbb{Z}^{2}} \lvert \textbf{k} \rvert^{6} \lvert \hat{u}_{2}^{n} \rvert^{2})^{\frac{1}{2}} = \lVert u_{2}^{n} \rVert_{\dot{H}^{1}} \lVert u_{2}^{n} \rVert_{\dot{H}^{3}}
\end{split}
\end{equation}
by H$\ddot{\mathrm{o}}$lder's inequality. Thus, we apply (\ref{4.12}) to (\ref{4.11}) and further bound by 
\begin{equation}\label{4.13}
\begin{split}
I_{2} \lesssim \lVert \bu^{n} \rVert_{H^{1}}^{\frac{3}{2}}(\lVert u_{2}^{n} \rVert_{L^{2}}^{\frac{1}{2}} + \lVert u_{2}^{n} \rVert_{\dot{H}^{2}}^{\frac{1}{2}}) \lVert u_{2}^{n} \rVert_{\dot{H}^{2}}\leq \frac{1}{4} \lVert u_{2}^{n} \rVert_{\dot{H}^{3}}^{2} + c(1+ \lVert \textbf{u}^{n} \rVert_{H^{1}}^{\frac{18}{5}})
\end{split}
\end{equation}
due to Young's inequality. Thirdly, we compute 
\begin{equation}\label{4.14}
I_{3} = \sigma \lVert \nabla u_{1}^{n} \rVert_{L^{2}}^{2}, \hspace{3mm} I_{4} \leq \lambda \lVert u_{2}^{n} \rVert_{\dot{H}^{1}} \lVert u_{2}^{n} \rVert_{\dot{H}^{3}} \leq \frac{1}{4} \lVert  u_{2}^{n} \rVert_{\dot{H}^{3}}^{2} +c \lVert \textbf{u}^{n} \rVert_{H^{1}}^{2} 
\end{equation}  
due to (\ref{4.8}), (\ref{4.12}) and Young's inequality. Fourthly, we compute 
\begin{equation}\label{4.15}
\begin{split}
II_{1} \leq \lVert \textbf{u}^{n} \rVert_{L^{4}} \lVert \nabla u_{1}^{n} \rVert_{L^{2}} \lVert u_{1}^{n} \rVert_{L^{4}} \lesssim \lVert \textbf{u}^{n} \rVert_{H^{1}}^{3} 
\end{split}
\end{equation} 
by (\ref{4.9}), H$\ddot{\mathrm{o}}$lder's inequality, (\ref{4.3c}) and the embedding of $H^{1}(\mathbb{T}^{2}) \hookrightarrow L^{4}(\mathbb{T}^{2})$. Fifthly, 
\begin{equation}\label{4.16}
\begin{split}
II_{2} \leq \lVert \bu^{n} \rVert_{L^{4}} \lVert \nabla u_{2}^{n} \rVert_{L^{2}} \lVert u_{2}^{n} \rVert_{L^{4}} \lesssim \lVert \bu^{n} \rVert_{H^{1}}^{3}
\end{split}
\end{equation} 
by (\ref{4.9}), H$\ddot{\mathrm{o}}$lder's inequality and (\ref{4.3c}). Finally, it is immediate that  
\begin{equation}\label{4.17}
II_{4} \lesssim \lVert \bu^{n} \rVert_{H^{1}}^{2}. 
\end{equation} 
Therefore, applying (\ref{4.10}), (\ref{4.13}), (\ref{4.14}), (\ref{4.15}), (\ref{4.16}) and (\ref{4.17}) to (\ref{4.8})-(\ref{4.9}) gives 
\begin{equation}\label{4.18} 
\frac{d}{dt} \lVert \bu^{n} \rVert_{H^{1}}^{2} + \nu \lVert \Delta u_{1}^{n} \rVert_{L^{2}}^{2} + \lVert u_{2}^{n} \rVert_{\dot{H}^{3}}^{2} \leq c(1+ \lVert \bu^{n} \rVert_{H^{1}}^{6} + \lVert \bu^{n} \rVert_{H^{1}}^{\frac{18}{5}} + \lVert \bu^{n} \rVert_{H^{1}}^{2} +\lVert \bu^{n} \rVert_{H^{1}}^{3}). 
\end{equation} 
This implies that there exists a constant $c \geq 0$ such that 
\begin{equation}\label{4.19}
\lVert \bu^{n}(t) \rVert_{H^{1}} 
\leq \frac{1+ \lVert \bu^{n} (0) \rVert_{H^{1}}}{[1 - 4ct (1+ \lVert \bu^{n}(0) \rVert_{H^{1}})^{4}]^{\frac{1}{4}}} - 1
\leq \frac{1+ \lVert \bu^{in} \rVert_{H^{1}}}{[1 - 4ct (1+ \lVert \bu^{in} \rVert_{H^{1}})^{4}]^{\frac{1}{4}}} - 1,
\end{equation} 
where we used the fact that $\lVert \bu^{n}(0) \rVert_{H^{1}} \leq \lVert \bu^{in} \rVert_{H^{1}}$, and the monotonicity of $g(x):=x/(1-\epsilon x^4)^{1/4}$ for small $x$ and small $\epsilon>0$. 
Thus, $H^{1}(\mathbb{T}^{2})$-norm does not blow up for all 
\begin{equation*}
t < T^{\ast} \triangleq \frac{1}{4c(1+ \lVert \bu^{in} \rVert_{H^{1}})^{4}}.
\end{equation*}
Hence, $T_{n} > T \triangleq \frac{T^{\ast}}{2}$ for all $n \in \mathbb{N}$ and 
\begin{equation}\label{4.20}
\bu^{n} \in L^{\infty}([0,T]; H^{1}(\mathbb{T}^{2})). 
\end{equation} 
We go back to (\ref{4.18}) and integrate in time to also deduce that 
\begin{equation}\label{4.21}
u_{1}^{n} \in L^{2}([0, T]; H^{2}(\mathbb{T}^{2})) \text{ and } u_{2}^{n} \in L^{2} ([0, T]; H^{3}(\mathbb{T}^{2}))
\end{equation} 
due to (\ref{4.20}). We also go back to (\ref{4.1a}) and directly take $L^{2}([0, T]; L^{2}(\mathbb{T}^{2}))$-norms to obtain 
\begin{equation}\label{4.22}
\begin{split}
\int_{0}^{T} \lVert \partial_{t} u_{1}^{n} \rVert_{L^{2}}^{2} d\tau 
\lesssim \sup_{t \in [0,T]} \lVert \nabla u_{1}^{n}(t) \rVert_{L^{2}}^{2} \int_{0}^{T} \lVert \bu^{n} \rVert_{H^{2}}^{2} d\tau + \int_{0}^{T} \lVert \Delta u_{1}^{n} \rVert_{L^{2}}^{2} + \lVert u_{1}^{n} \rVert_{L^{2}}^{2}  d\tau < \infty 
\end{split}
\end{equation} 
by the embedding of $H^{2}(\mathbb{T}^{2})\hookrightarrow L^{\infty} (\mathbb{T}^{2})$, (\ref{4.20}) and (\ref{4.21}). We also return to (\ref{4.1b}) and directly take $L^{2}([0,T]; H^{1}(\mathbb{T}^{2}))$-norms to obtain 
\begin{equation}\label{4.23} 
\begin{split}
&\int_{0}^{T} \lVert \partial_{t} u_{2}^{n} \rVert_{H^{-1}}^{2} d\tau \\
\lesssim& \sup_{t \in [0,T]} \lVert \nabla u_{2}^{n}(t) \rVert_{L^{2}}^{2}\int_{0}^{T} \lVert \bu^{n} \rVert_{H^{2}}^{2} d\tau + \int_{0}^{T} \lVert \nabla u_{2}^{n} \rVert_{L^{2}}^{2} + \lVert \Lambda^{3} u_{2}^{n} \rVert_{L^{2}}^{2} d\tau < \infty 
\end{split}
\end{equation} 
by the embedding of $H^{2}(\mathbb{T}^{2})\hookrightarrow L^{\infty}(\mathbb{T}^{2})$, (\ref{4.20}) and (\ref{4.21}). Finally, the fact that if $T^{\ast}$ is the maximal existence time and $T^{\ast} < \infty$, then $\limsup_{t\to T^{\ast}} \lVert \bu^{n}(t) \rVert_{H^{1}} = + \infty$ follows from how we deduced $T^{\ast} \triangleq \frac{1}{4c(1+ \lVert \bu^{in} \rVert_{H^{1}})^{4}}$ based on (\ref{4.19}). Indeed, if $\limsup_{t\to T^{\ast}} \lVert \bu^{n}(t) \rVert_{H^{1}} < \infty$, then we may obtain a solution on $[0,T]$, restart from $T$ until $T_{1}$ where $T_{1} < \frac{1}{4c(1+ \lVert \bu^{in}(T) \rVert_{H^{1}})^{4}}$; such a process may be repeated either for all time or until $\lVert \bu^{n} \rVert_{H^{1}}$ becomes infinite. This completes the proof of Proposition 4.1.   
\end{proof}

Using our results on the Galerkin approximation system, we will first deduce a local existence of a unique solution to (\ref{KSEr}). By Banach-Alaoglu theorem and weak compactness we obtain $\bu = (u_{1}, u_{2}) \in L^{\infty} ([0, T]; H^{1}(\mathbb{T}^{2}))$ such that $u_{1} \in L^{2}([0, T]; H^{2}(\mathbb{T}^{2})), u_{2} \in L^{2}([0, T]; H^{3}(\mathbb{T}^{2}))$ and a subsequence of $\{\bu^{n}\}_{n}$, which we  still denote by $\bu^{n}$, such that 
\begin{equation}\label{4.24}
\begin{split}
& \bu^{n} \to \bu \text{ weak}^{\ast} \text{ in } L^{\infty} ([0, T]; H^{1}(\mathbb{T}^{2})), \\
& u_{1}^{n} \to u_{1} \text{ weakly in } L^{2}([0, T]; H^{2}(\mathbb{T}^{2})), \\
& u_{2}^{n} \to u_{2} \text{ weakly in } L^{2}([0, T]; H^{3}(\mathbb{T}^{2}))
\end{split}
\end{equation} 
by (\ref{4.20}) and (\ref{4.21}). Now we let $p = 2, X = H^{2}(\mathbb{T}^{2}), Y = H^{-1}(\mathbb{T}^{2}), B = H^{s}(\mathbb{T}^{2})$ for $s \in [1,2)$ so that 
\begin{equation}\label{4.25}
\bu^{n} \to \bu \text{ strongly in } L^{2}(0, T; H^{s}(\mathbb{T}^{2})) \text{ for } s \in [1, 2)
\end{equation}
by Lemma 2.2, (\ref{4.21}), (\ref{4.22}) and (\ref{4.23}). Similarly letting $p = \infty, X = H^{1}(\mathbb{T}^{2}), Y = H^{-1}(\mathbb{T}^{2}), B = H^{s}(\mathbb{T}^{2})$ for $s \in [0, 1)$ shows that 
\begin{equation}\label{4.26}
\bu^{n} \to \bu \text{ strongly in } C([0,T]; H^{s}(\mathbb{T}^{2})) \text{ for } s \in [0, 1) 
\end{equation}
by Lemma 2.2, (\ref{4.20}), (\ref{4.22}) and (\ref{4.23}). Now we return to the Galerkin approximation (\ref{4.1a})-(\ref{4.1b}), take $L^{2}(\mathbb{T}^{2})$-inner products with $\{\textbf{w}_{j}\}_{j} = \{(w_{j,1}, w_{j,2}) \}_{j}  \subset H^{1}(\mathbb{T}^{2})$ that is dense in $H^{1}$ and multiply by $\psi: [0,T] \mapsto \mathbb{R}$ such that $\psi \in C^{1}([0,T])$ and $\psi(T) = 0$ to deduce 
\begin{subequations}
\begin{align}
& -\int_{0}^{T} (u_{1}^{n}, \psi'(t) w_{j,1}) dt + \nu \int_{0}^{T} (\nabla u_{1}^{n}, \nabla w_{j,1}) \psi(t) dt \nonumber\\
&+ \int_{0}^{T} (P_{n} ((\bu^{n} \cdot \nabla) u_{1}^{n}), w_{j,1} \psi(t) ) dt = (u_{1}^{n}(0), \psi(0) w_{j,1}) + \sigma \int_{0}^{T} (u_{1}^{n}, w_{j,1} \psi(t)) dt, \label{4.27a}\\
& - \int_{0}^{T} (u_{2}^{n}, \psi'(t) w_{j,2}) dt + \int_{0}^{T} (\Delta u_{2}^{n}, \Delta w_{j,2}) \psi(t) dt \nonumber\\
&+ \int_{0}^{T} (P_{n}((\bu^{n} \cdot \nabla) u_{2}^{n}), w_{j,2} \psi(t)) dt = (u_{2}^{n}(0), \psi(0) w_{j,2}) + \lambda \int_{0}^{T} (\nabla u_{2}^{n}, \nabla w_{j,2}) \psi(t) dt. \label{4.27b}
\end{align}
\end{subequations}
Firstly, 
\begin{equation}\label{4.28}
\begin{split}
& \int_{0}^{T} (u_{1}^{n}, \psi'(t) w_{j,1}) dt - \int_{0}^{T} (u_{1}, \psi'(t) w_{j,1}) dt \\
\leq& \sup_{t \in [0,T]} \lVert u_{1}^{n} - u_{1} \rVert_{L^{2}} \int_{0}^{T} \lVert w_{j,1} \rVert_{L^{2}} \lvert \psi'(t) \rvert dt \to 0 
\end{split}
\end{equation} 
by H$\ddot{\mathrm{o}}$lder's inequality and (\ref{4.26}). Identically we can show that  
\begin{equation}\label{4.29}
\int_{0}^{T} (u_{2}^{n}, \psi'(t) w_{j,2}) dt \to \int_{0}^{T} (u_{2}, \psi'(t) w_{j,2}) dt 
\end{equation} 
as $n\to\infty$. Next, 
\begin{equation}\label{4.30}
\begin{split}
& \nu \int_{0}^{T} (\nabla u_{1}^{n}, \nabla w_{j,1}) \psi(t) dt - \nu \int_{0}^{T} (\nabla u_{1}, \nabla w_{j,1}) \psi(t) dt \\
\leq& \nu( \int_{0}^{T} \lVert \nabla (u_{1}^{n} - u_{1}) \rVert_{L^{2}}^{2} dt)^{\frac{1}{2}}(\int_{0}^{T} \lVert \nabla w_{j,1} \rVert_{L^{2}}^{2} \psi^{2}(t) dt )^{\frac{1}{2}} \to 0 
\end{split}
\end{equation} 
as $n\to\infty$ by H$\ddot{\mathrm{o}}$lder's inequality and (\ref{4.25}). Identically we can show  
\begin{equation}\label{4.31}
\lambda \int_{0}^{T} (\nabla u_{2}^{n}, \nabla w_{j,2}) \psi(t) dt \to \lambda \int_{0}^{T} (\nabla u_{2}, \nabla w_{j,2}) \psi(t) dt 
\end{equation} 
as $n\to\infty$. On the other hand, we have 
\begin{equation}\label{4.32}
\begin{split}
& \int_{0}^{T} (\Delta u_{2}^{n}, \Delta w_{j,2}) \psi(t) dt - \int_{0}^{T} (\Delta u_{2}, \Delta w_{j,2}) \psi(t) dt \\
=& \int_{0}^{T} ((u_{2}^{n} - u_{2}), \Delta^{2} w_{j,2}) \psi(t) dt \to 0 
\end{split}
\end{equation} 
as $n\to\infty$ due to (\ref{4.24}) and that $(\Delta^{2} w_{j}) \psi(t) \in L^{2}(0, T; H^{-3}(\mathbb{T}^{2}))$. Next, 
\begin{equation}\label{4.33}
\begin{split}
& \int_{0}^{T} (P_{n} ((\bu^{n} \cdot \nabla) u_{1}^{n}), w_{j,1} \psi(t)) dt - \int_{0}^{T} ((u\cdot\nabla) u_{1}, w_{j,1} \psi(t)) dt \\
=& \int_{0}^{T} ((P_{n} - Id) (\bu^{n} \cdot \nabla) u_{1}^{n}, w_{j,1} \psi(t)) \\
&+ ((\bu^{n} - \bu) \cdot \nabla u_{1}^{n}, w_{j,1} \psi(t)) + ((\bu\cdot\nabla) (u_{1}^{n} - u_{1}), w_{j,1} \psi(t)) dt \triangleq \sum_{i=1}^{3} III_{i}. 
\end{split}
\end{equation} 
We estimate 
\begin{equation}\label{4.34}
\begin{split}
|III_{1}| \leq& \int_{0}^{T} \lVert \bu^{n} \rVert_{L^{4}} \lVert \nabla u_{1}^{n} \rVert_{L^{2}} \lVert (P_{n} - Id) w_{j,1} \rVert_{L^{4}} \lvert \psi(t) \rvert dt \\
\lesssim& (\sup_{t \in [0,T]} \lVert \bu^{n}(t) \rVert_{H^{1}}) (\sup_{t \in [0,T]} \lVert u_{1}^{n}(t) \rVert_{H^{1}}) \lVert (P_{n} - Id) w_{j,1} \rVert_{H^{1}} \int_{0}^{T} \lvert \psi(t) \rvert dt \to 0 
\end{split}
\end{equation} 
as $n\to\infty$ by (\ref{4.33}), (\ref{4.3d}), H$\ddot{\mathrm{o}}$lder's inequality, the embedding of $H^{1}(\mathbb{T}^{2}) \hookrightarrow L^{4}(\mathbb{T}^{2})$ and (\ref{4.20}). Secondly, 
\begin{equation}\label{4.35}
\begin{split}
|III_{2}| \leq& \int_{0}^{T} \lVert \bu^{n} - \bu \rVert_{L^{3}} \lVert \nabla u_{1}^{n} \rVert_{L^{2}} \lVert w_{j,1} \rVert_{L^{6}} |\psi(t)| dt \\
\leq& (\sup_{t \in [0,T]} \lVert (\bu^{n} - \bu)(t) \rVert_{H^{\frac{1}{2}}} ) \sup_{t \in [0,T]} (\lVert \nabla u_{1}^{n} (t) \rVert_{L^{2}}) \lVert w_{j,1} \rVert_{H^{1}} \int_{0}^{T} \lvert \psi(t) \rvert dt \to 0 
\end{split}
\end{equation}  
as $n\to\infty$ by (\ref{4.33}), H$\ddot{\mathrm{o}}$lder's inequality, the embeddings of $H^{\frac{1}{2}}(\mathbb{T}^{2}) \hookrightarrow L^{3}(\mathbb{T}^{2})$, $H^{1}(\mathbb{T}^{2})\hookrightarrow L^{6}(\mathbb{T}^{2})$, (\ref{4.20}) and (\ref{4.26}). Thirdly, 
\begin{equation}\label{4.36}
\begin{split}
|III_{3}| \leq& \int_{0}^{T} \lVert \bu \rVert_{L^{4}} \lVert \nabla (u_{1}^{n} - u_{1}) \rVert_{L^{2}} \lVert w_{j,1} \rVert_{L^{4}} \lvert \psi(t) \rvert dt \\
\lesssim& (\sup_{t \in [0,T]} \lVert \bu(t) \rVert_{H^{1}}) (\int_{0}^{T} \lVert \nabla (u_{1}^{n} - u_{1} ) \rVert_{L^{2}}^{2} dt)^{\frac{1}{2}} \lVert w_{j,1} \rVert_{H^{1}} (\int_{0}^{T} \lvert \psi(t) \rvert^{2} dt)^{\frac{1}{2}} \to 0 
\end{split}
\end{equation} 
as $n\to\infty$ by (\ref{4.33}) and H$\ddot{\mathrm{o}}$lder's inequality. Thus, applying (\ref{4.34})-(\ref{4.36}) to (\ref{4.33}) gives 
\begin{equation}\label{4.37}
\int_{0}^{T} (P_{n} (\bu^{n} \cdot \nabla) u_{1}^{n}, w_{j,1} \psi(t)) dt \to \int_{0}^{T} ((\bu\cdot\nabla) u_{1}, w_{j,1} \psi(t)) dt 
\end{equation} 
as $n\to\infty$. We did not rely on anything special about $u_{1}$ in the computations of (\ref{4.33})-(\ref{4.36}); thus, the same argument \textit{mutatis mutandis} shows that 
\begin{equation}\label{4.38}
\int_{0}^{T} (P_{n} (\bu^{n} \cdot \nabla) u_{2}^{n}, w_{j,2} \psi(t)) dt \to \int_{0}^{T} ((\bu\cdot\nabla) u_{2}, w_{j,2} \psi(t)) dt 
\end{equation} 
as $n\to\infty$. Finally, 
\begin{equation}\label{4.39}
\begin{split}
|(u_{1}^{n}(0), \psi(0) w_{j,1}) - (u_{1}(0), \psi(0) w_{j,1})|
\leq \lVert (P_{n} - Id) u_{1}^{in} \rVert_{L^{2}} \lVert w_{j,1} \rVert_{L^{2}} \psi(0) \to 0 
\end{split}
\end{equation} 
as $n\to\infty$ due to that (\ref{4.1c}). Identically we can show that 
\begin{equation}\label{4.40}
(u_{2}^{n}(0), \psi(0) w_{j,2}) \to (u_{2}^{in} (0), \psi(0) w_{j,2}) 
\end{equation} 
as $n\to\infty$. Thus, considering (\ref{4.28}), (\ref{4.29}), (\ref{4.30}), (\ref{4.31}), (\ref{4.32}), (\ref{4.37}), (\ref{4.38}), (\ref{4.39}), (\ref{4.40}), along with 
\begin{align*}
&\sigma \int_{0}^{T} (u_{1}^{n}, w_{j,1} \psi(t)) dt - \sigma \int_{0}^{T} (u_{1}, w_{j,1} \psi(t)) dt \\
\leq& \sigma \lVert u_{1}^{n} - u_{1} \rVert_{C([0,T]; L^{2}(\mathbb{T}^{2}))} \lVert w_{j,1}\rVert_{L^{2}} T \lVert  \psi \rVert_{C([0,T])} \to 0 
\end{align*} 
as $n\to\infty$ due to (\ref{4.26}), we may pass to the limit to obtain 
\begin{subequations}
\begin{align}
& -\int_{0}^{T} (u_{1}, \psi'(t) w_{j,1}) dt + \nu \int_{0}^{T} (\nabla u_{1}, \nabla w_{j,1}) \psi(t) dt \nonumber\\
&+ \int_{0}^{T} ((\bu\cdot\nabla) u_{1}, w_{j,1} \psi(t)) dt = (u_{1}^{in}, \psi(0) w_{j,1}) + \sigma \int_{0}^{T} (u_{1}, w_{j,1} \psi(t)) dt, \label{4.41a}\\
& -\int_{0}^{T} (u_{2}, \psi'(t) w_{j,2}) dt + \int_{0}^{T} (\Delta u_{2}, \Delta w_{j,2}) \psi(t) dt \nonumber\\
&+ \int_{0}^{T} ((\bu\cdot\nabla) u_{2}, w_{j,2} \psi(t)) dt = (u_{2}^{in}, \psi(0) w_{j,2}) + \lambda \int_{0}^{T} (\nabla u_{2}, \nabla w_{j,2}) \psi(t) dt \label{4.41b}
\end{align}
\end{subequations} 
for all $\textbf{w}_{j} = (w_{j,1}, w_{j,2}) \in H^{1}(\mathbb{T}^{2})$. It follows that (\ref{4.41a})-(\ref{4.41b}) continues to hold for any linear combinations of $\textbf{w}_{j} = (w_{j,1}, w_{j,2}) \in H^{1}(\mathbb{T}^{2})$ and thus for any $\textbf{v} = (v_{1}, v_{2}) \in H^{1}(\mathbb{T}^{2})$ by continuity and denseness of $\{ \textbf{w}_{j}\}_{j}$ in $H^{1}(\mathbb{T}^{2})$. Taking $\psi \in C_{c}^{\infty} ([0,T])$ also shows that 
\begin{subequations}
\begin{align}
&(\partial_{t} u_{1}, v_{1}) + \nu (\nabla u_{1}, \nabla v_{1}) + (\bu\cdot\nabla u_{1}, v_{1}) = \sigma (u_{1}, v_{1}), \label{4.42a}\\
& (\partial_{t} u_{2}, v_{2})+ (\Delta u_{2}, \Delta v_{2}) + (\bu\cdot\nabla u_{2}, v_{2}) = \lambda (\nabla u_{2}, \nabla v_{2}), \label{4.42b}
\end{align}
\end{subequations} 
holds in the distributional sense. We also multiply (\ref{4.42a})-(\ref{4.42b}) by $\psi \in C_{c}^{\infty}([0,T])$ such that $\psi(T) = 0$ and $\psi(0) = 1$ and integrate over $[0,T]$ to deduce 
\begin{subequations}
\begin{align}
& -\int_{0}^{T} (u_{1}, \psi'(t) v_{1}) dt + \nu \int_{0}^{T} (\nabla u_{1}, \nabla v_{1}) \psi(t) dt\nonumber\\
&+ \int_{0}^{T} ((\bu\cdot\nabla) u_{1}, v_{1} \psi(t)) dt = (u_{1}(0), \psi(0) v_{1}),\label{4.43a}\\
& - \int_{0}^{T} (u_{2}, \psi'(t) v_{2}) dt + \int_{0}^{T} (\Delta u_{2}, \Delta v_{2}) \psi(t) dt + \sigma \int_{0}^{T}(u_{1}, v_{1} \psi(t)) dt,  \nonumber\\
&+ \int_{0}^{T} ((\bu\cdot\nabla) u_{2}, v_{2} \psi(t)) dt = (u_{2}(0), \psi(0) v_{2}) + \lambda \int_{0}^{T} (\nabla u_{2}, \nabla v_{2}) \psi(t) dt.  \label{4.43b}
\end{align}
\end{subequations} 
In comparison of (\ref{4.41a})-(\ref{4.41b}) and (\ref{4.43a})-(\ref{4.43b}), we see that 
\begin{equation}\label{4.44}
\begin{split}
(u_{1}(0), \psi(0) v_{1}) = (u_{1}^{in}, \psi(0) v_{1}) \text{ and } (u_{2}(0), \psi(0) v_{2}) = (u_{2}^{in}, \psi(0) v_{2}).
\end{split} 
\end{equation} 
Thus, $(u_{1}(0) - u_{1}^{in}, v_{1}) =0$ and $(u_{2}(0) - u_{2}^{in}, v_{2}) = 0$ for any $v= (v_{1}, v_{2}) \in H^{1}(\mathbb{T}^{2})$, which implies that $u_{1}(0) = u_{1}^{in}$ and $u_{2}(0) = u_{2}^{in}$ in the sense of $L^2(\mathbb{T}^{2})$ functions.

Next, concerning uniqueness, suppose that $\bu = (u_{1}, u_{2})$ and $\textbf{v} = (v_{1}, v_{2})$ are both solutions to (\ref{KSEr}) with the same initial data. Letting $\textbf{w} = (w_{1}, w_{2}) \triangleq \bu-\textbf{v}$ gives 
\begin{subequations}
\begin{align}
& \partial_{t} w_{1} + (\textbf{u}\cdot\nabla) w_{1} + (\textbf{w}\cdot\nabla) v_{1} = \sigma w_{1} + \nu \Delta w_{1}, \label{4.45a}\\
& \partial_{t} w_{2} + (\textbf{u}\cdot\nabla) w_{2} + (\textbf{w}\cdot\nabla) v_{2} = - \lambda \Delta w_{2} - \Delta^{2} w_{2} \label{4.45b}
\end{align} 
\end{subequations}
so that taking $L^{2}(\mathbb{T}^{2})$-inner products with $(w_{1}, w_{2})$ leads to 
\begin{equation}\label{4.46}
\begin{split}
\frac{1}{2} \frac{d}{dt} \lVert \textbf{w} \rVert_{L^{2}}^2 +& \nu \lVert \nabla w_{1} \rVert_{L^{2}}^{2} + \lVert \Delta w_{2} \rVert_{L^{2}}^{2} 
= -( (\bu\cdot\nabla) w_{1}, w_{1})
+ ((\textbf{w}\cdot\nabla) v_{1}, w_{1}) \\
&+ ((\bu\cdot\nabla) w_{2}, w_{2})
+ ((\textbf{w}\cdot\nabla) v_{2}, w_{2}) 
+ \sigma \lVert w_{1} \rVert_{L^{2}}^{2} + \lambda \lVert \nabla w_{2} \rVert_{L^{2}}^{2} \triangleq \sum_{i=1}^{6} IV_{i}. 
\end{split}
\end{equation} 
We point out that in contrast to the case of the NSE, $IV_{1}$ and $IV_{3}$ in (\ref{4.46}) do not immediately vanish due to the lack of divergence-free property of $\bu$ in (\ref{KSE}). Now we estimate the terms in (\ref{4.46}) 
\begin{equation}\label{4.47}
\begin{split}
|IV_{1}| \leq \lVert \bu \rVert_{L^{\infty}} \lVert \nabla w_{1} \rVert_{L^{2}} \lVert w_{1} \rVert_{L^{2}} 
\leq \frac{\nu}{4} \lVert \nabla w_{1} \rVert_{L^{2}}^{2}+ c \lVert \bu \rVert_{H^{2}}^{2} \lVert w_{1} \rVert_{L^{2}}^{2}, 
\end{split}
\end{equation} 
\begin{equation}\label{4.48}
\begin{split}
|IV_{2}| \leq& \lVert \textbf{w}\rVert_{L^{2}} \lVert \nabla v_{1} \rVert_{L^{4}} \lVert w_{1} \rVert_{L^{4}} \\
\leq& c \lVert \textbf{w} \rVert_{L^{2}}^{2} \lVert \nabla v_{1} \rVert_{L^{2}} + \frac{\nu}{4} \lVert \nabla w_{1} \rVert_{L^{2}}^{2} + c \lVert \textbf{w} \rVert_{L^{2}}^{2} \lVert \Delta v_{1} \rVert_{L^{2}}^{2} 
\end{split}
\end{equation} 
and 
\begin{equation}\label{4.49}
\begin{split}
|IV_{3}| \leq \lVert \bu \rVert_{L^{\infty}} \lVert \nabla w_{2} \rVert_{L^{2}} \lVert w_{2} \rVert_{L^{2}}
\leq \frac{1}{4} \lVert \Delta w_{2} \rVert_{L^{2}}^{2} + c \lVert \bu \rVert_{H^{2}}^{\frac{4}{3}} \lVert w_{2} \rVert_{L^{2}}^{2}  
\end{split}
\end{equation} 
where we used H$\ddot{\mathrm{o}}$lder's inequality, the embeddings of $H^{2}(\mathbb{T}^{2})\hookrightarrow L^{\infty}(\mathbb{T}^{2})$ and $H^{1}(\mathbb{T}^{2})\hookrightarrow L^{4}(\mathbb{T}^{2})$, that 
\begin{equation}\label{4.50}
\lVert \nabla w_{2} \rVert_{L^{2}}^{2} = \sum_{\textbf{k} \in \mathbb{Z}^{2}} \lvert \textbf{k} \rvert^{2} \lvert \hat{w}_{2} (\textbf{k}) \rvert^{2} \leq (\sum_{\textbf{k} \in \mathbb{Z}^{2}} \lvert \hat{w}_{2}(\textbf{k}) \rvert^{2})^{\frac{1}{2}}(\sum_{\textbf{k} \in \mathbb{Z}^{2}} \lvert \textbf{k}\rvert^{4} \lvert \hat{w}_{2} (\textbf{k}) \rvert^{2} )^{\frac{1}{2}} \leq \lVert w_{2} \rVert_{L^{2}} \lVert w_{2} \rVert_{\dot{H}^{2}}
\end{equation} 
and Young's inequality. Next, 
\begin{equation}\label{4.51}
\begin{split}
|IV_{4}| \lesssim& \lVert \textbf{w} \rVert_{L^{2}} \lVert \nabla v_{2} \rVert_{H^{1}} \lVert w_{2} \rVert_{H^{1}} \\
\lesssim& \lVert \textbf{w} \rVert_{L^{2}}(\lVert \nabla v_{2} \rVert_{L^{2}} \lVert w_{2} \rVert_{L^{2}} + \lVert \Delta v_{2} \rVert_{L^{2}} \lVert \nabla w_{2} \rVert_{L^{2}})\\
\lesssim& \lVert \textbf{w} \rVert_{L^{2}}^{2} \lVert \nabla v_{2} \rVert_{L^{2}} + \lVert w \rVert_{L^{2}}^{\frac{3}{2}} \lVert \Delta v_{2} \rVert_{L^{2}} \lVert \Delta w_{2} \rVert_{L^{2}}^{\frac{1}{2}} \\
\leq& c \lVert \textbf{w} \rVert_{L^{2}}^{2} \lVert \Delta v_{2} \rVert_{L^{2}} + \frac{1}{8} \lVert \Delta w_{2} \rVert_{L^{2}}^{2} + c \lVert \textbf{w} \rVert_{L^{2}}^{2} \lVert \Delta v_{2} \rVert_{L^{2}}^{\frac{4}{3}} 
\end{split}
\end{equation} 
by (\ref{4.46}), H$\ddot{\mathrm{o}}$lder's inequality, the embedding of $H^{1}(\mathbb{T}^{2}) \hookrightarrow L^{4}(\mathbb{T}^{2})$, (\ref{4.50}) and Young's inequality. Finally, 
\begin{equation}\label{4.52}
\begin{split}
|IV_{6}| \leq \lambda \lVert w_{2} \rVert_{L^{2}} \lVert \Delta w_{2} \rVert_{L^{2}} 
\leq \frac{1}{8} \lVert \Delta w_{2} \rVert_{L^{2}}^{2} + c \lVert w_{2} \rVert_{L^{2}}^{2} 
\end{split}
\end{equation} 
by (\ref{4.46}), (\ref{4.50}) and Young's inequality. Therefore, we apply (\ref{4.47}), (\ref{4.48}), (\ref{4.49}), (\ref{4.51}) and (\ref{4.52}) to (\ref{4.46}) and conclude that 
\begin{equation}\label{4.53}
\begin{split}
& \frac{1}{2} \frac{d}{dt} \lVert \textbf{w} \rVert_{L^{2}}^{2} + \frac{\nu}{2} \lVert \nabla w_{1} \rVert_{L^{2}}^{2} + \frac{1}{2} \lVert \Delta w_{2} \rVert_{L^{2}}^{2} \\
\leq& c(1 + \lVert \textbf{u} \rVert_{H^{2}}^{2} + \lVert \textbf{v} \rVert_{H^{1}} + \lVert \textbf{v} \rVert_{H^{2}}^{2} + \lVert \bu \rVert_{H^{2}}^{\frac{4}{3}} + \lVert \textbf{v} \rVert_{H^{1}}^{\frac{4}{3}}) \lVert \textbf{w} \rVert_{L^{2}}^{2}. 
\end{split}
\end{equation} 
Gr\"onwall's inequality implies uniqueness, considering that $\textbf{u},\textbf{v} \in L^{2} ([0,T]; H^{2})$ due to (\ref{4.24}).  

Next, we finally extend our local solution globally in time. It suffices to prove a uniform bound on $H^{1}$-norm considering Proposition 4.1. We will need the following exponential growth bound on the supremum norm of $u_{1}$. 
\begin{proposition}\label{exponential growth}
Let $\bu = (u_{1}, u_{2})$ be a smooth solution to (\ref{KSEr}) over time interval $[0,T]$. Then for any $\alpha > \sigma > 0$, 
\begin{equation}\label{exp_growth_bound}
\sup_{t \in [0,T]} \lVert u_{1}(t) \rVert_{L^{\infty}} \leq \lVert u_{1}^{in} \rVert_{L^{\infty}}e^{2\alpha T}. 
\end{equation} 
\end{proposition} 

\begin{proof}
From (\ref{KSEr1}), we may fix $\alpha > \sigma > 0$, denote by $\phi(x,t) \triangleq e^{-\alpha t} u_{1}(x,t)$ and consider the equation of evolution of $\lvert \phi \rvert^{2}$.  A straight-forward computation yields,
\begin{equation}\label{4.55} 
\begin{split}
\partial_{t} \lvert \phi \rvert^{2} = - 2 \alpha \lvert \phi \rvert^{2} - (\bu\cdot\nabla) \lvert \phi \rvert^{2} + 2\nu \phi\Delta \phi  + 2 \sigma \lvert \phi \rvert^{2} 
\end{split}
\end{equation}  
due to (\ref{KSEr1}). Using the identity
\begin{equation}\label{4.56}
\begin{split}
\Delta \lvert \phi \rvert^{2} - 2 \lvert \nabla \phi \rvert^{2} = 2 \phi \Delta \phi 
\end{split}
\end{equation} 
we rewrite (\ref{4.55}) as 
\begin{equation}\label{4.57}
\partial_{t} \lvert \phi \rvert^{2} + 2 (\alpha-\sigma) \lvert \phi \rvert^{2} + (\bu\cdot\nabla) \lvert \phi \rvert^{2} -\nu\Delta \lvert \phi \rvert^{2} + 2\nu \lvert \nabla \phi \rvert^{2} = 0. 
\end{equation} 
Now suppose that $\lvert \phi \rvert^{2}$ has maximum at $(\textbf{x}^{\ast}, t^{\ast}) \in (0, T] \times \mathbb{T}^{2}$ and $\lvert \phi(\textbf{x}^{\ast}, t^{\ast}) \rvert^{2} \neq 0$. Then the left side of (\ref{4.57}) becomes strictly positive, leading to an immediate contradiction. Therefore, either $\lvert \phi(\textbf{x}^{\ast}, t^{\ast}) \rvert^{2} = 0$ and has a maximum at ($\textbf{x}^{\ast}, t^{\ast})$ or $\lvert \phi(\textbf{x}, t) \rvert^{2}$ has no maximum on $(0, T]\times \mathbb{T}^{2}$. If $\lvert \phi(\textbf{x}^{\ast}, t^{\ast}) \rvert^{2}  = 0$, then $\lvert \phi\rvert \equiv 0$ on $(0, T] \times \mathbb{T}^{2}$ so that $\phi = e^{-2 \alpha t} u_{1} \equiv 0$ indicating that $u_{1} \equiv 0$; hence (\ref{exp_growth_bound}) follows. On the other hand, if $\lvert \phi(\textbf{x}^{\ast}, t^{\ast} ) \rvert^{2} \neq 0$, because we know that the maximum exists on $[0,T]\times \mathbb{T}^{2}$, we must have 
\begin{equation*}
\lvert \phi(\textbf{x},t) \rvert^{2} \leq \lvert \phi(\textbf{x}^{\ast}, 0) \rvert^{2} 
\end{equation*} 
for some $\textbf{x}^{\ast} \in \mathbb{T}^{2}$ and all $t \in [0,T]$ and hence 
\begin{equation*}
\lvert u_{1}(\textbf{x},t) \rvert^{2} \leq e^{2\alpha t} \lvert u_{1}(\textbf{x}^{\ast}, 0) \rvert^{2} 
\end{equation*} 
for some $\textbf{x}^{\ast} \in \mathbb{T}^{2}$ and all $t \in [0,T]$. Therefore, 
\begin{equation*}
\lVert u_{1}(t) \rVert_{L^{\infty}}^{2} \leq e^{2\alpha t} \lVert u_{1}^{in} \rVert_{L^{\infty}}^{2} 
\end{equation*} 
for all $t \in [0,T]$, and thus (\ref{exp_growth_bound}) now follows. 
\end{proof}

\begin{remark}
If $\sigma =0$, then we could take $\alpha \searrow 0$ in (\ref{exp_growth_bound}) to deduce the maximum principle. Indeed, here we employed a proof that typically proves maximum principle and proved an exponential growth bound. This was necessary because even though a typical method to prove an exponential growth bound is an energy estimate and an application of Gr$\ddot{\mathrm{o}}$nwall's inequality (e.g., in the case of the 2D Euler equations), the structure of (\ref{KSEr}) does not allow the energy estimate to work due to the lack of conserved quantity to start with.  
\end{remark} 

The $L^{\infty} ([0,T]; L^{\infty}(\mathbb{T}^{2}))$-bound on $u_{1}$ leads to the following 
bound on $u_{2}$. 

\begin{proposition}\label{prop_u2_linf_bound}
Let $u = (u_{1}, u_{2})$ solve (\ref{KSEr}) over time interval $[0,T]$. Then 
\begin{equation}\label{4.58}
u_{2} \in L^{\infty} ([0, T]; L^{2}(\mathbb{T}^{2})) \cap L^{2}([0, T]; H^{2}(\mathbb{T}^{2})).  
\end{equation} 
\end{proposition} 

\begin{proof}
We take $L^{2}(\mathbb{T}^{2})$-inner products on (\ref{KSEr2}) with $u_{2}$ to obtain 
\begin{equation}\label{4.59}
\begin{split}
\frac{1}{2} \frac{d}{dt} \lVert u_{2} \rVert_{L^{2}}^{2} + \lVert \Delta u_{2} \rVert_{L^{2}}^{2} = - \int u_{1} \partial_{1} u_{2} u_{2} - \int u_{2} \partial_{2} u_{2} u_{2} - \lambda \int \Delta u_{2} u_{2}. 
\end{split}
\end{equation} 
We make use of the fact that $\int u_{2} (\partial_{2} u_{2}) u_{2} = \int \frac{1}{6} \partial_{2} (u_{2})^{3} = 0 $ and   estimate 
\begin{equation}\label{4.60}
\begin{split}
\frac{1}{2} \frac{d}{dt} \lVert u_{2} \rVert_{L^{2}}^{2} + \lVert \Delta u_{2} \rVert_{L^{2}}^{2} 
\leq& \lVert u_{1} \rVert_{L^{\infty}} \lVert \nabla u_{2} \rVert_{L^{2}} \lVert u_{2} \rVert_{L^{2}} + \frac{1}{4} \lVert \Delta u_{2} \rVert_{L^{2}}^{2} + c \lVert u_{2} \rVert_{L^{2}}^{2} \\
\leq& \frac{1}{2} \lVert \Delta u_{2} \rVert_{L^{2}}^{2} + c \lVert u_{2} \rVert_{L^{2}}^{2}
\end{split}
\end{equation} 
by H$\ddot{\mathrm{o}}$lder's inequality, Young's inequality, (\ref{exp_growth_bound}) and (\ref{4.50}). Subtracting $\frac{1}{2} \lVert \Delta u_{2} \rVert_{L^{2}}^{2}$ from both sides of (\ref{4.60}) and applying Gr\"onwall's inequality completes the proof of Proposition \ref{prop_u2_linf_bound}. 
\end{proof} 

We are almost ready to complete the $H^{1}(\mathbb{T}^{2})$-bound; however, we will see that we need to improve the $L^{\infty}([0,T]; L^{\infty}(\mathbb{T}^{2}))$-bound of $u_{1}$ to $L^{2}([0,T]; H^{1}(\mathbb{T}^{2}))$-bound as usual (see (\ref{4.64}). 

\begin{proposition}\label{prop_u1_L2H1_bound}
Let $\textbf{u} = (u_{1}, u_{2})$ solve (\ref{KSEr}) over time interval $[0,T]$. Then 
\begin{equation}\label{4.61}
u_{1} \in L^{2}([0, T]; H^{1}(\mathbb{T}^{2})).  
\end{equation} 
\end{proposition} 

\begin{proof}
We take $L^{2}(\mathbb{T}^{2})$-inner products on (\ref{KSEr1}) with $u_{1}$ to first rewrite 
\begin{align*}
\frac{1}{2} \frac{d}{dt} \lVert u_{1} \rVert_{L^{2}}^{2} + \nu \lVert \nabla u_{1} \rVert_{L^{2}}^{2} =& -\int (\bu\cdot\nabla) u_{1} u_{1} + \sigma \lVert u_{1} \rVert_{L^{2}}^{2} \\
=& -\int u_{1} \partial_{1} u_{1} u_{1} + u_{2} \partial_{2} u_{1} u_{1} + \sigma \lVert u_{1} \rVert_{L^{2}}^{2} \\
=& -\int u_{2} \frac{1}{2} \partial_{2} (u_{1})^{2}+ \sigma \lVert u_{1} \rVert_{L^{2}}^{2}  = \frac{1}{2} \int (\partial_{2} u_{2}) u_{1} u_{1} + \sigma \lVert u_{1} \rVert_{L^{2}}^{2}  
\end{align*} 
where we used \eqref{buuu_1D_zero}; this is crucial because we do not have any bound on the derivative of $u_{1}$ yet. Now we continue to bound by 
\begin{equation}\label{4.62}
\begin{split}
\frac{1}{2} \frac{d}{dt} \lVert u_{1} \rVert_{L^{2}}^{2} + \nu \lVert \nabla u_{1} \rVert_{L^{2}}^{2} \leq& \frac{1}{2} \lVert \nabla u_{2} \rVert_{L^{2}} \lVert u_{1} \rVert_{L^{\infty}} \lVert u_{1} \rVert_{L^{2}}  + \sigma \lVert u_{1} \rVert_{L^{2}}^{2} \\
\lesssim& \lVert \nabla u_{2} \rVert_{L^{2}} \lVert u_{1} \rVert_{L^{\infty}}^{2} + \lVert u_{1} \rVert_{L^{\infty}}^{2} 
\lesssim \lVert \nabla u_{2} \rVert_{L^{2}} + 1 
\end{split}
\end{equation} 
by H$\ddot{\mathrm{o}}$lder's inequality, the embedding of $L^{\infty}(\mathbb{T}^{2})\hookrightarrow L^{2}(\mathbb{T}^{2})$ and (\ref{exp_growth_bound}). Because $u_{2} \in L^{2}([0, T]; H^{2}(\mathbb{T}^{2}))$ by (\ref{4.58}),  integrating (\ref{4.62}) in time completes the proof of Proposition \ref{prop_u1_L2H1_bound}. 
\end{proof} 

Finally, the following proposition will complete the proof of Theorem 3.2. 

\begin{proposition}\label{prop_u1u2_bounds}
Let $\textbf{u} = (u_{1}, u_{2})$ solve (\ref{KSEr}) over time interval $[0,T]$. Then 
\begin{equation}\label{4.63}
\textbf{u}\in L^{\infty} ([0,T]; H^{1}(\mathbb{T}^{2})), u_{1} \in L^{2}([0,T]; H^{2}(\mathbb{T}^{2})), u_{2} \in L^{2} ([0,T]; H^{3}(\mathbb{T}^{2})). 
\end{equation} 
\end{proposition} 

\begin{proof}
We take $L^{2}(\mathbb{T}^{2})$-inner products on (\ref{KSEr}) with $(-\Delta u_{1}, -\Delta u_{2})$ to study 
\begin{equation}\label{4.64}
\begin{split}
&\frac{1}{2} \frac{d}{dt} \lVert \nabla u \rVert_{L^{2}}^{2} + \nu \lVert \Delta u_{1} \rVert_{L^{2}}^{2} + \lVert  u_{2} \rVert_{\dot{H}^{3}}^{2} \\
=& \int (\bu\cdot\nabla) u_{1} \Delta u_{1} + \int (\bu\cdot\nabla) u_{2} \Delta u_{2} + \sigma \lVert \nabla u_{1} \rVert_{L^{2}}^{2} + \lambda \lVert \Delta u_{2} \rVert_{L^{2}}^{2} \\
=& -\sum_{i,k=1}^{2} \int \partial_{k} u_{i} \partial_{i} u_{1} \partial_{k} u_{1} + \partial_{k} u_{i} \partial_{i} u_{2} \partial_{k} u_{2} \\
&+ \frac{1}{2} \sum_{i,k=1}^{2} \int \partial_{i} u_{i} \partial_{k} u_{1} \partial_{k} u_{1} + \partial_{i} u_{i} \partial_{k} u_{2} \partial_{k} u_{2} + \sigma \lVert \nabla u_{1} \rVert_{L^{2}}^{2} +  \lambda \lVert \Delta u_{2} \rVert_{L^{2}}^{2} \\
\lesssim& \lVert \nabla u \rVert_{L^{2}} \lVert \nabla u_{1} \rVert_{L^{4}}^{2} + \lVert \nabla u \rVert_{L^{2}} \lVert \nabla u_{2} \rVert_{L^{4}}^{2} + \lVert \nabla u_{1} \rVert_{L^{2}}^{2}+  \lVert \Delta u_{2} \rVert_{L^{2}}^{2} \\
\lesssim& \lVert \nabla u \rVert_{L^{2}} \lVert u_{1} \rVert_{H^{1}} \lVert u_{1} \rVert_{H^{2}}+ \lVert \nabla u \rVert_{L^{2}} \lVert  u_{2}   \rVert_{H^{1}}^{\frac{3}{2}} \lVert u_{2}  \rVert_{H^{3}}^{\frac{1}{2}} + \lVert \nabla u_{1} \rVert_{L^{2}}^{2} + \lVert \nabla u_{2} \rVert_{L^{2}} \lVert  u_{2} \rVert_{\dot{H}^{3}} \\
\leq& \frac{\nu}{2} \lVert \Delta u_{1} \rVert_{L^{2}}^{2} + \frac{1}{2} \lVert u_{2} \rVert_{\dot{H}^{3}}^{2} + c(1+ \lVert \nabla u \rVert_{L^{2}}^{2}) (1+ \lVert \nabla u \rVert_{L^{2}}^{2}). 
\end{split}
\end{equation} 
where we used H$\ddot{\mathrm{o}}$lder's inequality, (\ref{4.12}) and Gagliardo-Nirenberg inequality. Subtracting $\frac{\nu}{2} \lVert \Delta u_{1} \rVert_{L^{2}}^{2} + \frac{1}{2} \lVert  u_{2} \rVert_{\dot{H}^{3}}^{2}$ from both sides and relying on Gr\"onwall's inequality completes the proof of Proposition \ref{prop_u1u2_bounds} as $u_{1} \in L^{2}([0, T]; H^{1}(\mathbb{T}^{2}))$ and $u_{2} \in L^{2}([0, T]; H^{2}(\mathbb{T}^{2}))$ by (\ref{4.58}) and (\ref{4.61}). 
\end{proof} 

\section{Computational Results}
\noindent
In this section, we demonstrate the dynamical differences between the KSE system \eqref{KSE} and r-KSE system \eqref{KSEr} by looking at numerical simulations of the equations side-by-side.  We do not focus on the particular dynamics of the KSE system \eqref{KSE}, since this has been studied elsewhere.  See, e.g., \cite{Kalogirou_Keaveny_Papageorgiou_2015} for an in-depth computational study of the 2D KSE and \cite{Bezia_Mabrouk_2019} for a finite-difference scheme for the 2D KSE.  We also mention the computational study \cite{Tomlin_Kalogirou_Papageorgiou_2018}, which examines a 2D dispersive anisotropic version of the KSE, with nonlinearity $uu_x$.  

Note that \textit{we do not make any claims that solutions of the r-KSE are good approximations to solutions of the KSE}, but we are interested in it for phenomenological reasons, as discussed in the introduction.  Therefore, we present solutions to the KSE in comparison to solutions of the r-KSE to get some idea of their similarities and differences.

\subsection{Choice of Parameters}
The r-KSE system has two additional parameters that do not appear in the KSE model; namely $\nu>0$ and $\sigma\geq0$.  While this lends considerable freedom, it also greatly increases the parameter space that can be explored.  Hence, we wish to restrict the parameter space to some phenomenologically interesting region.  While we are not trying to approximate solutions of the KSE, we seek to capture some of its qualitative properties.  Thus, it makes sense to make the ``instability cut-off'' for the r-KSE match that of the KSE.  That is, if we compute the Fourier symbols of the linear operators in the r-KSE and KSE systems, namely $\sigma + \nu\Delta$ and $\lambda\Delta +\Delta^2$, we obtain $\sigma -\nu|\mathbf k|^2$ and $\lambda|\mathbf k|^2 -|\mathbf k|^4$, respectively.  Thus, the unstable modes occur exactly at those values of $\mathbf k$ where  $|\mathbf k|^2< \sigma/\nu $ (for $u_1$ in r-KSE) and $|\mathbf k|^2<\lambda$ (for KSE and $u_2$ in r-KSE).  Thus, to obtain the same instability cut-off for both equations, in our simulations, we set:
\begin{align}\label{param_cut_off}
 \nu = \sigma/\lambda\quad\text{ when }\sigma>0.
\end{align}
(Notice that there is no explicit contribution to the unstable modes from the $\sigma + \nu\Delta$ operator when $\sigma = 0$.)  Thus, equating the instability cut-offs eliminates one free parameter.

To further limit the parameter space, one can consider the ``total amount of instability'' contributed by the two linear operators, and equate these.  Toward this end, we compute:
\begin{align}
 \sum_{\{\mathbf k\in\nZ^2: |\mathbf k|^2<\sigma/\nu\}}(\sigma -\nu|\mathbf k|^2)
 \approx 
 \int_{B(0,\sqrt{\frac{\sigma}{\nu}})}(\sigma -\nu|\bx|^2)\,d\bx
 =
 \pi\frac{\sigma}{\nu} - 2\pi\nu\int_0^{\sqrt{\frac{\sigma}{\nu}}}r^3\,dr
 =
 \frac{\pi\sigma^2}{2\nu},
\end{align}
and similarly,
\begin{align*}
 \sum_{\{\mathbf k\in\nZ^2: |\mathbf k|^2<\lambda\}}(\lambda|\mathbf k|^2 -|\mathbf k|^4)
  &\approx 
 \int_{B(0,\sqrt{\frac{\sigma}{\nu}})}(\lambda|\bx|^2 -|\bx|^4)\,d\bx
=
 \pi\pnt{\frac{\sigma}{\nu}}^2\pnt{\frac{\lambda}{2} - \frac{1}{3}\frac{\sigma}{\nu}}
\end{align*}
Setting these equal, we find
\begin{align}
3\nu&=3\lambda - 2\frac{\sigma}{\nu}.
\end{align}
Combining this with \eqref{param_cut_off}, we find
\begin{align}\label{nu_sigma_values}
 \nu = \frac{\lambda}{3}\text{ and } \sigma = \frac{\lambda^2}{3}.
\end{align}
Thus, if we accept the equating done above (as well as the approximation of the 2D sum by a 2D integral) our two parameters $\nu$ and $\sigma$ are determined purely in terms of the KSE parameter $\lambda$.

\subsection{Numerical Methods}
We performed our simulations in MATLAB (version R2019a).  The domain was a periodic square, $\Omega = [-\pi,\pi)^2$, using standard pseudo-spectral methods respecting the 2/3's dealiasing rule for (see, e.g., \cite{Canuto_Hussaini_Quarteroni_Zang_2006,Orszag_1971_dealiasing,Peyret_2013_spectral_book,Shen_Tang_Wang_2011} and the references therein for details of psuedospectral methods).  (The 2/3's dealising cut-off can be seen in Figure \ref{compare_spec} as a vertical line.)  
We use an implicit/explicit Runge-Kutta-4-type algorithm, where the linear terms are handled implicitly via an exponential time-differencing algorithm (ETD, also called the exponential integrator method) using complex contour integration to handle removable singularities of the form $(e^z-1)/z$, $(e^z-z-1)/z^2$, and so on (see, e.g., \cite{Kassam_Trefethen_2005,Kennedy_Carpenter_2003_IMEX}).  

For any simulation one must of course decide on specific values of parameters that hopefully give a reasonable picture of the more general dynamics (unless the parameters are determined, e.g., by physical considerations).  This choice is typically limited by computational constraints, which limit spatial and temporal resolution.  As the KSE have a fourth-order dissipation term, it is somewhat forgiving in terms of spatial resolution, as small scales a dissipated quickly.  This is less true for the r-KSE (since the $u_1$ equation has only second-order dissipation), but one can still observe simulations which appear highly nonlinear chaotic at resolution $512^2$, reasonable enough to be run on a good laptop; a choice that was made in the hope that this study may be reproduced with relative ease by other researchers. We considered our simulations to be ``well-resolved'' if the energy spectrum of the solution had decayed to machine precision ($\approx2.2204\times10^{-16}$ in MATLAB) before the dealiasing cut-off, verified \textit{a posteriori}.  A trial-and-error search through parameter space, making sure to respect this criterion, yielded $\lambda  = 5.01$ to be a well-resolved value (for the time interval simulated), meaning there were $20=|\set{(m,n)\in\nZ^2\setminus(0,0)|m^2+n^2<5.01}|$ unstable modes in the KSE part.  Using this with \eqref{nu_sigma_values}, we find $\nu=1.67$, $\sigma=8.37$ for the r-KSE.  We also observed the case $\nu=1.67$ with $\sigma=0$ to observe the effect of the $\sigma$ term.  The time step was chosen to respect the advective CFL condition at each time step (we used a conservative value of $\Delta t\approx3.0557\times10^{-5}$).  In all simulations of r-KSE, the spatial resolution was $512^2$ grid points (uniform rectangular mesh).  For KSE simulations, the dissipation from the biLaplacian was large enough that we only needed $128^2$ resolution.  Our initial data was chosen similarly to be the well-studied initial data in \cite{Kalogirou_Keaveny_Papageorgiou_2015}.  
%
 Namely, we set 
\begin{align}\label{initial_data_sim}
 \varphi(x,y) := C(\sin(x+y) + \sin(x) + \sin(y)),\text{ and }\bu^{in}=\nabla\varphi.
\end{align}
where $C$ is chosen so that $\|\bu^{in}\|_{L^2}=1$.  
\begin{remark}\label{grad_remark}
 Several issues arise with verification of numerical schemes for 4th-order nonlinear equations in higher dimensions.  For example, the standard method of manufactured solutions (i.e., choosing a function to be an exact solution, and using it to determine an initial condition, and an appropriate forcing function on the right-hand side) can have lead to large computed errors if one uses the $L^2$ norm to compute the error.  To see this, consider a spatial resolution of $512^2$ on the domain $[-\pi,\pi)^2$ as in our simulations, meaning that the highest resolved frequency (the Nyquist frequency) is $k_{\text{Ny}}=512/2$.  Assuming a machine-zero error of $\varepsilon=2.2204\times10^{-16}$ occurs at this frequency, the resulting computation for the bi-Laplacian $\triangle^2$ for just this node would involve an error of size $\varepsilon k_{\text{Ny}}^4\approx 9.54\times10^{-7}$ (compare with the Laplacian case: $\varepsilon k_{\text{Ny}}^2\approx 1.46\times10^{-11}$).  Given that there are $512^2 = 262,144$ spatial nodes, errors can accumulate quite rapidly if one sums over the domain; hence, even if the computatiocn is done to high precision (e.g., using ETD methods or integrating factors, so that one is multiplying by factors involving small factors such as $e^{(-|\mathbf{k}|^4+\lambda |\mathbf{k}|^2)\Delta t}$), the computation of the error itself may show low precision.  Hence, seems to be better to consider, e.g., the $L^\infty$ norm instead of the $L^2$ norm for purposes of verification.  Another implication is that, if one can run at lower spatial resolution (as determined by the fall-off of the energy spectrum), it may be better to do so to avoid polluting the solution with noise.  Hence, the KSE solution we show below is run at resolution $128^2$, since the energy spectrum decays to machine precision long before the 2/3's dealising cutoff at $|\mathbf{k}| = 128/3 \approx 42.67$.
  
 Aside from the problem of computation of the error, when simulating a chaotic dynamical system such as the KSE, it is important to have several checks to make sure simulations results do not depend too heavily on the numerical scheme.  The results reported here were also checked with integrating factor methods, and similar results were obtained.  We also check that resolved simulations at lower resolution qualitatively agreed with those at higher resolution.  However, with the KSE system, we were able to perform an additional check: namely, we simulated equation \eqref{KSE_scalar} along \eqref{KSE}, resulting in solutions $\varphi$ and $\bu$ respectively, and then checked $\|\bu - \nabla\varphi\|_{L^\infty}$.  Analytically, if solutions are smooth, one should have $\bu = \nabla\varphi$, but computationally, one expects disagreement between these quantities to arise due to small errors accumulating over time, combined with the chaotic nature of the equations. 
The results of our simulations can be seen in Figure \ref{u_minus_grad_phi}.  \textit{It is for this reason that our simulations shown below are shown for relatively small times, e.g., $t\leq1$} (even though our simulations were stable for significantly larger times).   
\end{remark}
 \begin{wrapfigure}[14]{rt}{60mm}
\centering
\includegraphics[scale=0.35,trim=0mm 0mm 0mm 7mm, clip]{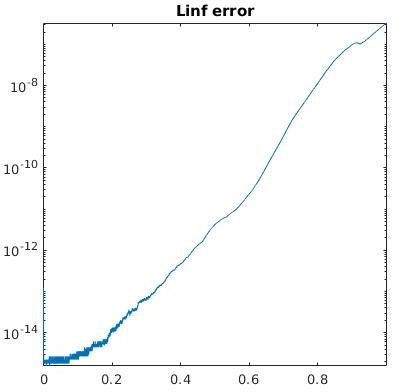}
\caption{\label{u_minus_grad_phi}\scriptsize Error 
$\|\bu(t) - \nabla\varphi(t)\|_{L^\infty}$
vs. time (Log-linear plot.)}
 \end{wrapfigure}

\subsection{Simulations}
 It is important to keep in mind that the r-KSE system \eqref{KSEr} is not meant to be a model for the KSE system \eqref{KSE} in the sense of approximating the dynamical evolution of solutions, and therefore no particular agreement between solutions is expected.  Moreover, both systems appear to behave chaotically, in the sense that small perturbations of the initial conditions or parameters can strongly affect the evolution of solutions, and therefore the major change made by moving from the the KSE system to the r-KSE system studied here is unlikely to produce similar trajectories, which is what we observe in Figure \ref{compare_solu}. However, we claim that the dynamics of the r-KSE are phenomenologically similar to the KSE, at least in certain aspects, which we investigate below.  We note that while we saw many varied types of behavior in our simulations, the simulations presented were not chosen too carefully, and we believe they represent fairly typical behavior for these systems.

  \begin{figure}[!ht]
  \phantom{aaa} 
\hspace{2mm} KSE  \hspace{28mm}r-KSE, $\nu=1.67$, $\sigma=8.37$ \hspace{0.3cm} r-KSE, $\nu=1.67$, $\sigma = 0$\\
\includegraphics[width=0.32\textwidth,trim=28mm 8mm 27mm 4mm, clip]{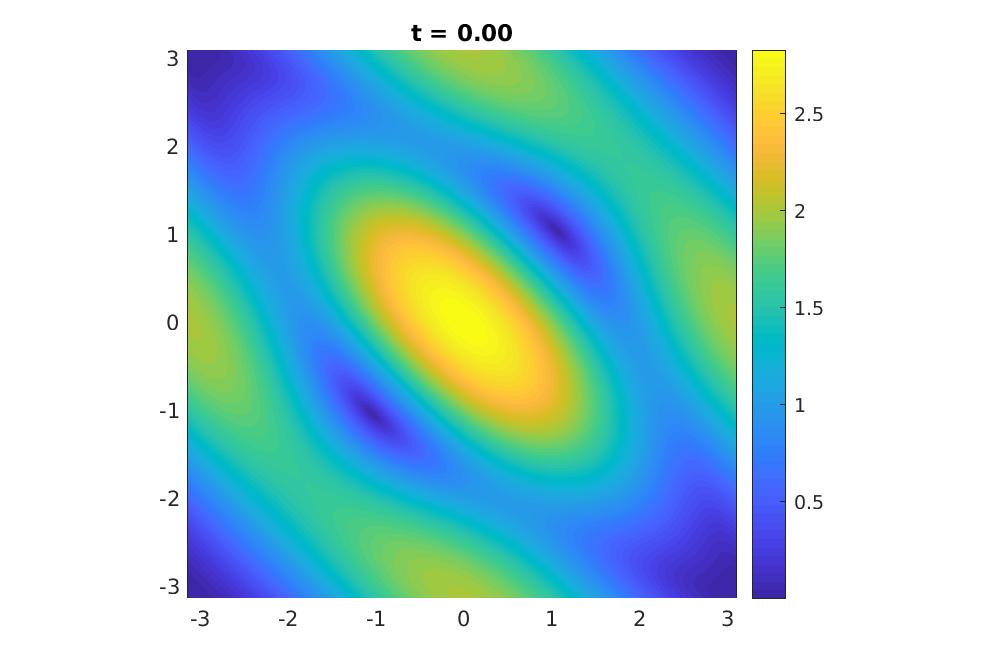}
\includegraphics[width=0.32\textwidth,trim=28mm 8mm 27mm 4mm, clip]{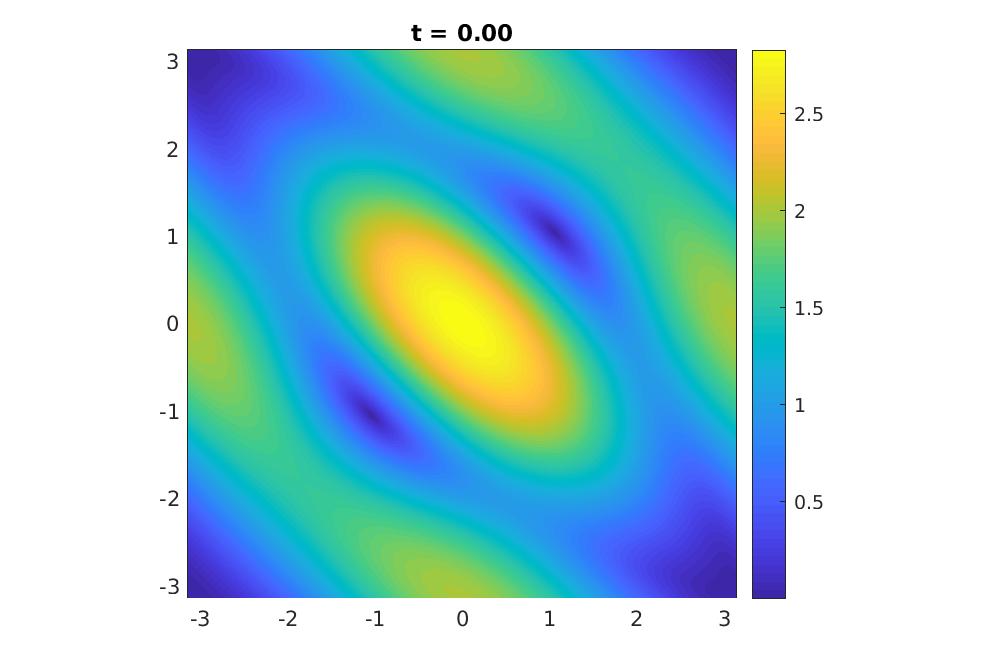}
\includegraphics[width=0.32\textwidth,trim=28mm 8mm 27mm 4mm, clip]{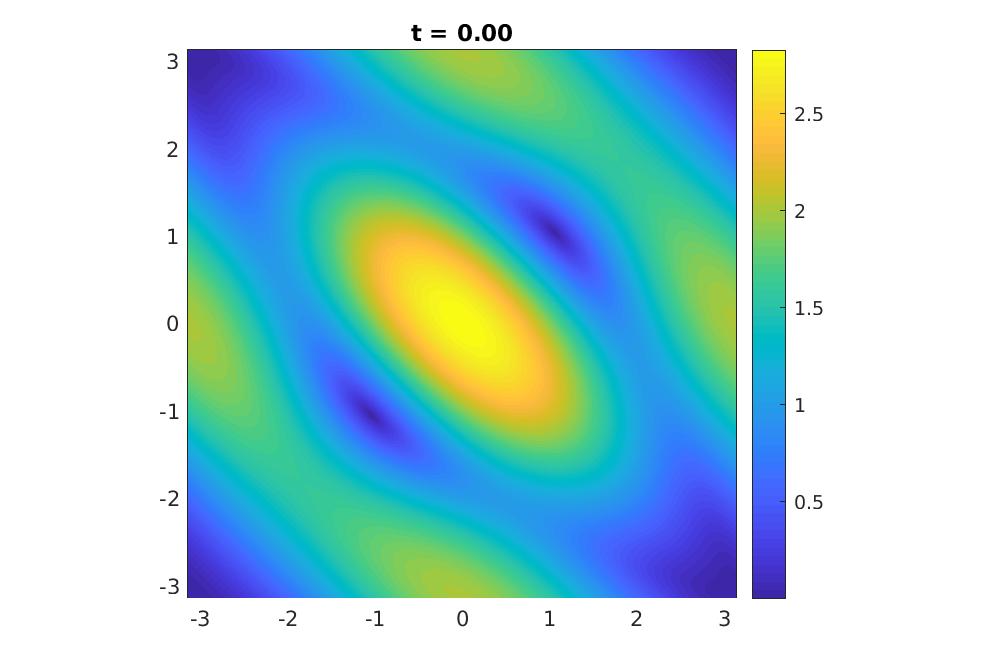}
\hspace{1cm}
\includegraphics[width=0.32\textwidth,trim=28mm 8mm 27mm 4mm, clip]{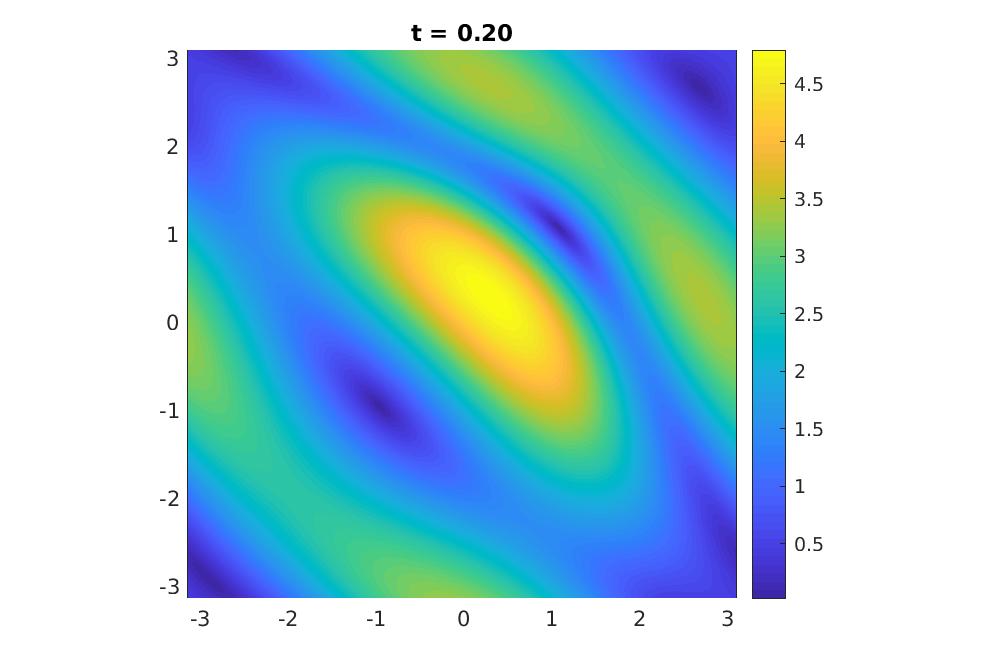}
\includegraphics[width=0.32\textwidth,trim=28mm 8mm 27mm 4mm, clip]{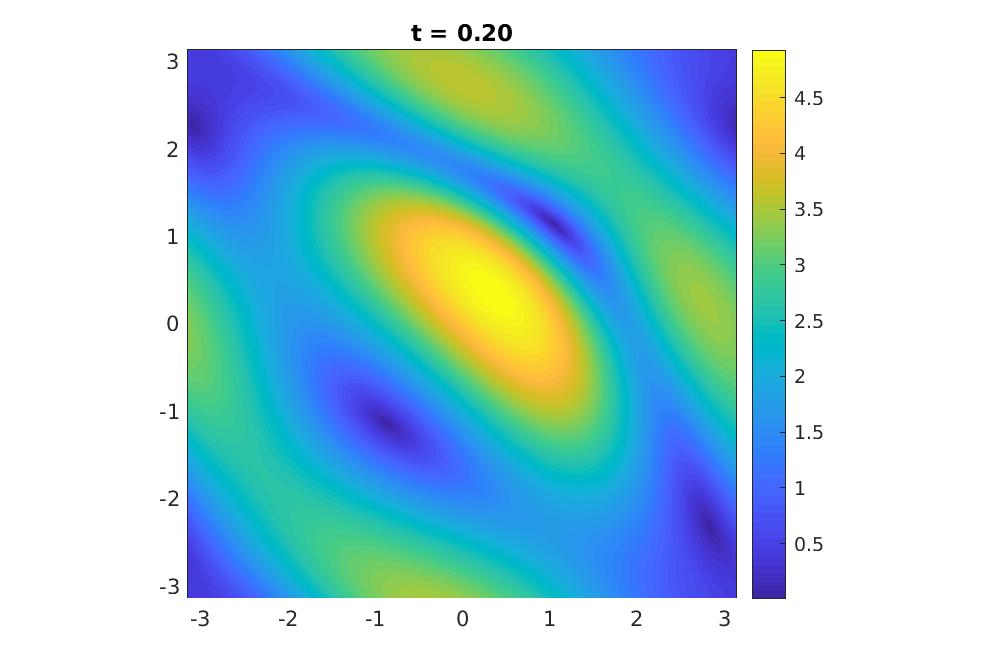}
\includegraphics[width=0.32\textwidth,trim=28mm 8mm 27mm 4mm, clip]{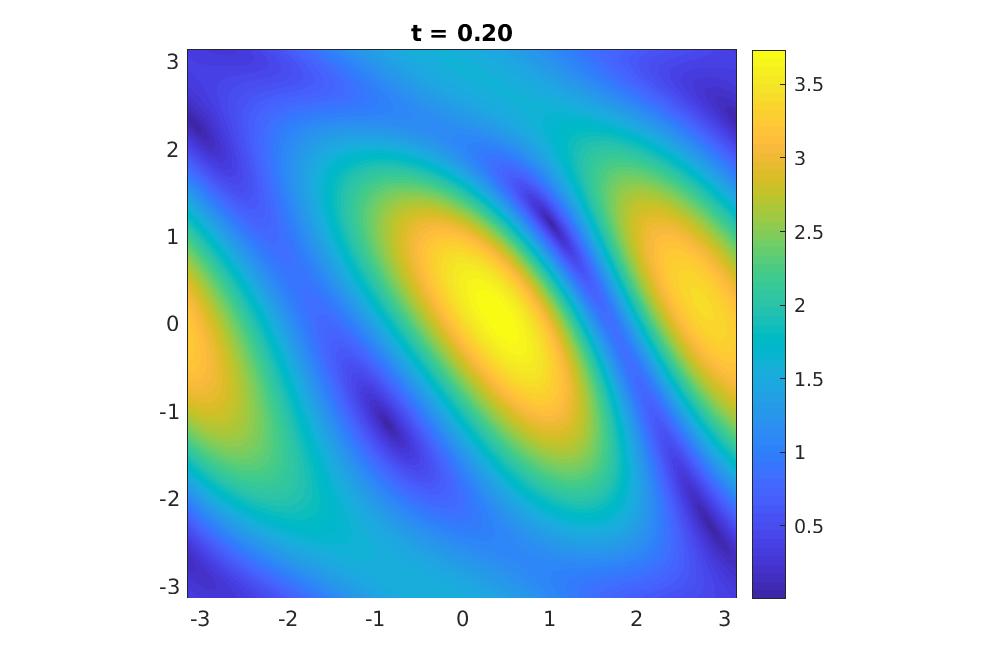}
\hspace{1cm}
\includegraphics[width=0.32\textwidth,trim=28mm 8mm 27mm 4mm, clip]{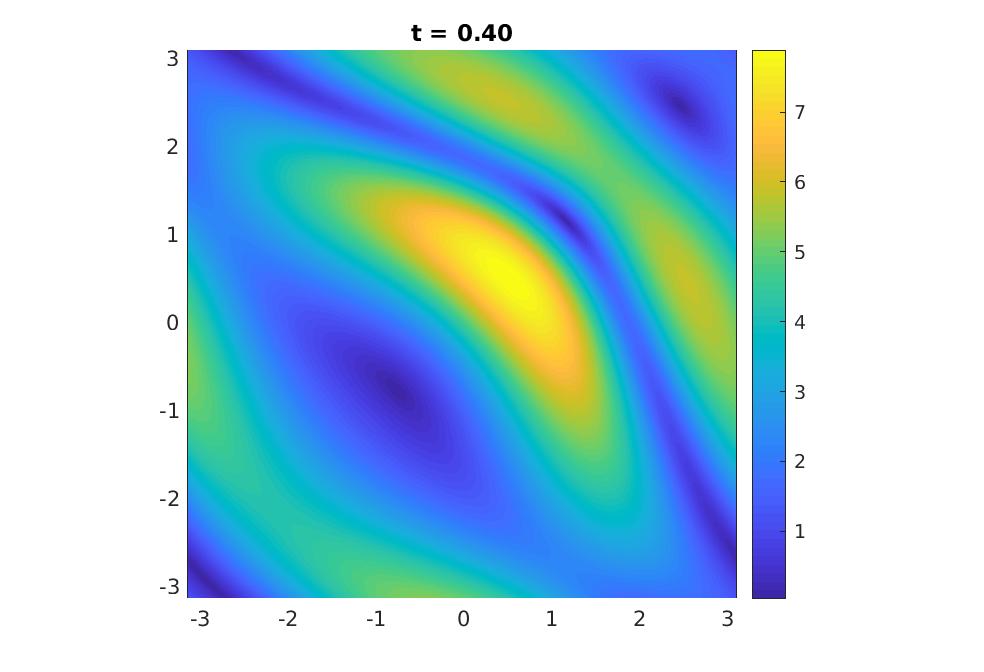}
\includegraphics[width=0.32\textwidth,trim=28mm 8mm 27mm 4mm, clip]{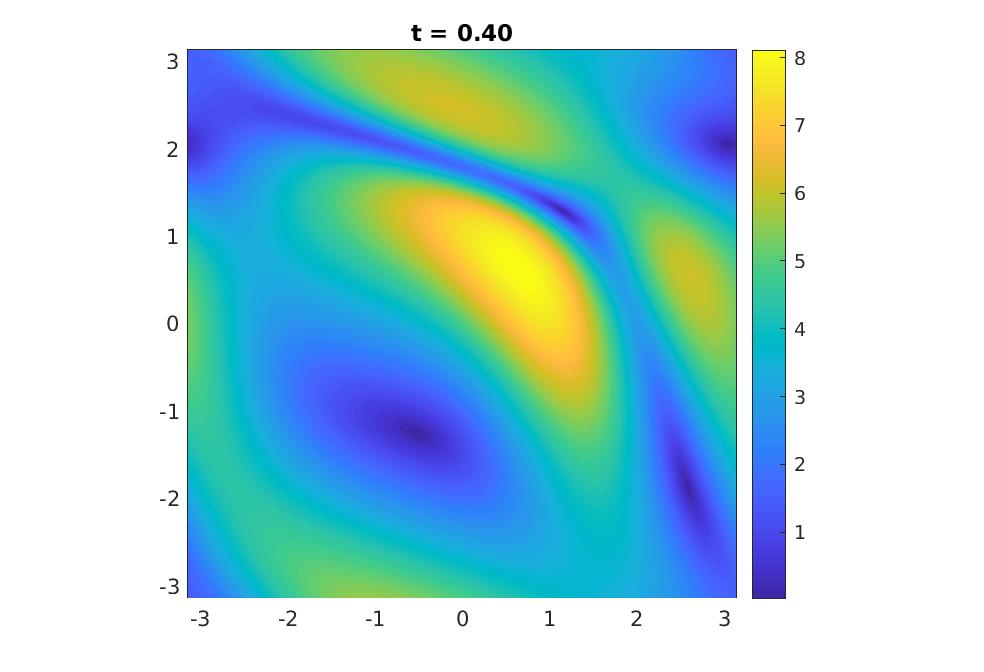}
\includegraphics[width=0.32\textwidth,trim=28mm 8mm 27mm 4mm, clip]{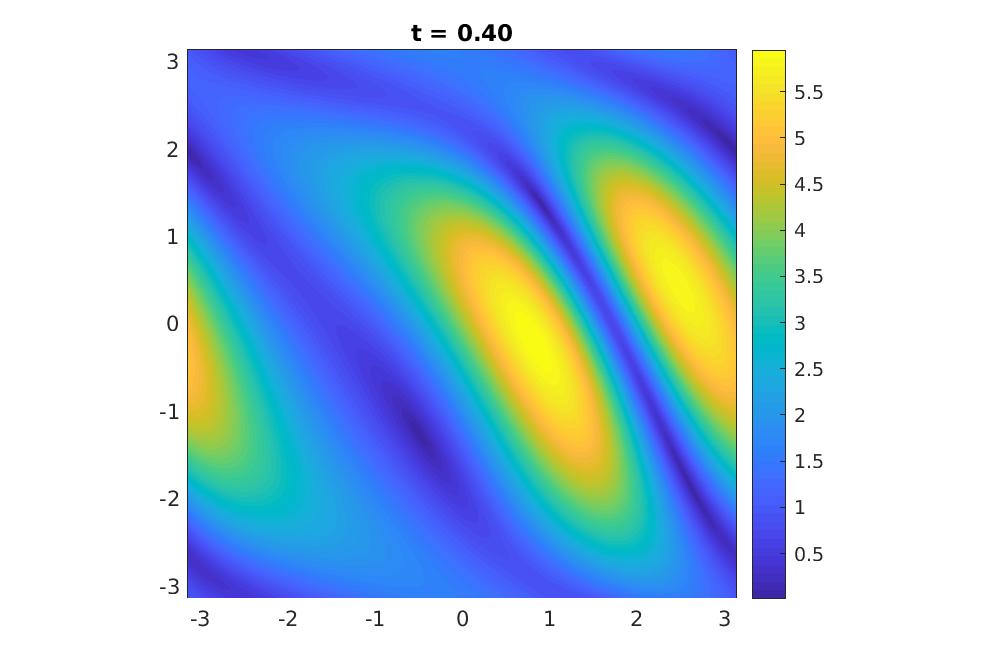}
\includegraphics[width=0.32\textwidth,trim=28mm 8mm 27mm 4mm, clip]{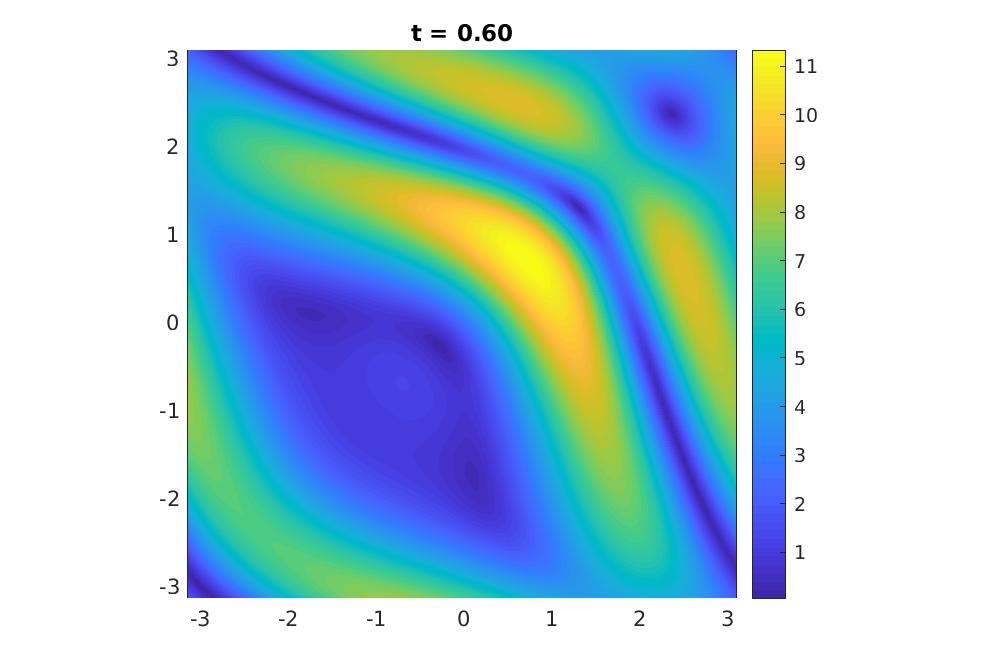}
\includegraphics[width=0.32\textwidth,trim=28mm 8mm 27mm 4mm, clip]{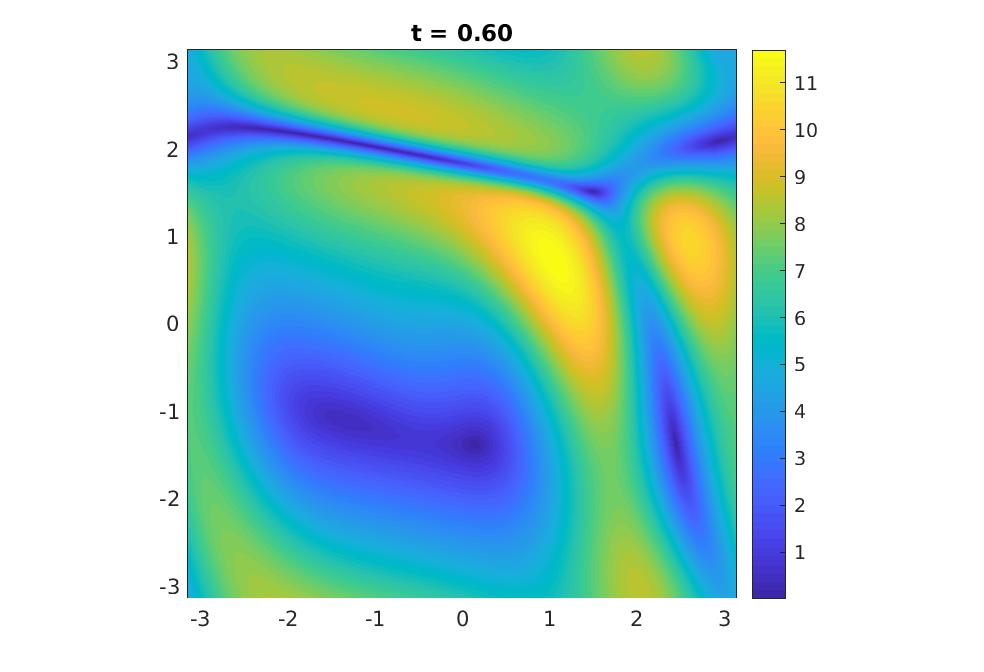}
\includegraphics[width=0.32\textwidth,trim=28mm 8mm 27mm 4mm, clip]{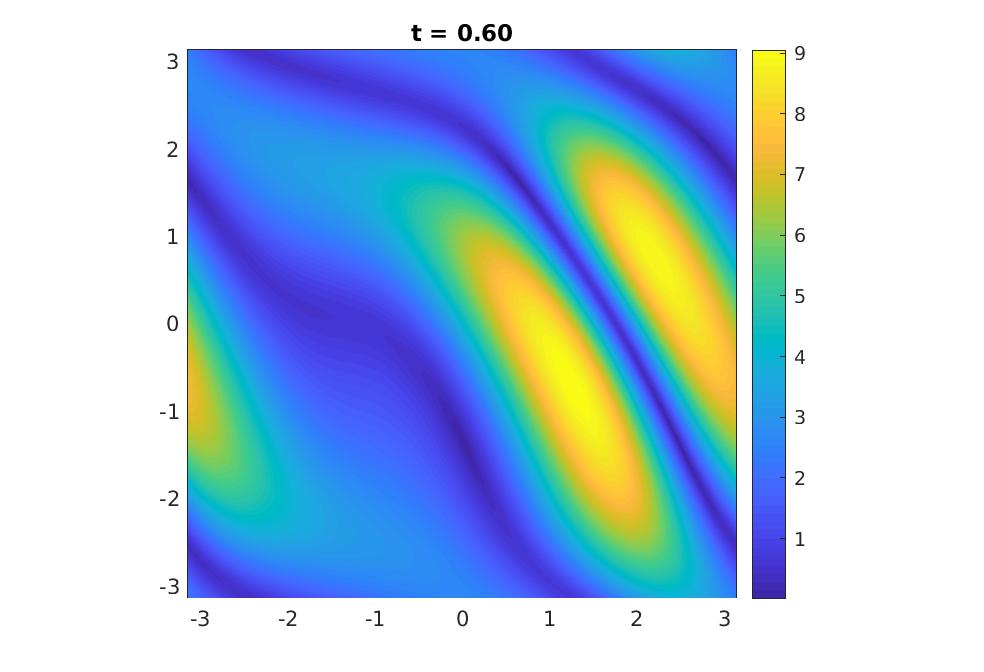}
\hspace{1cm}
\includegraphics[width=0.32\textwidth,trim=28mm 8mm 27mm 4mm, clip]{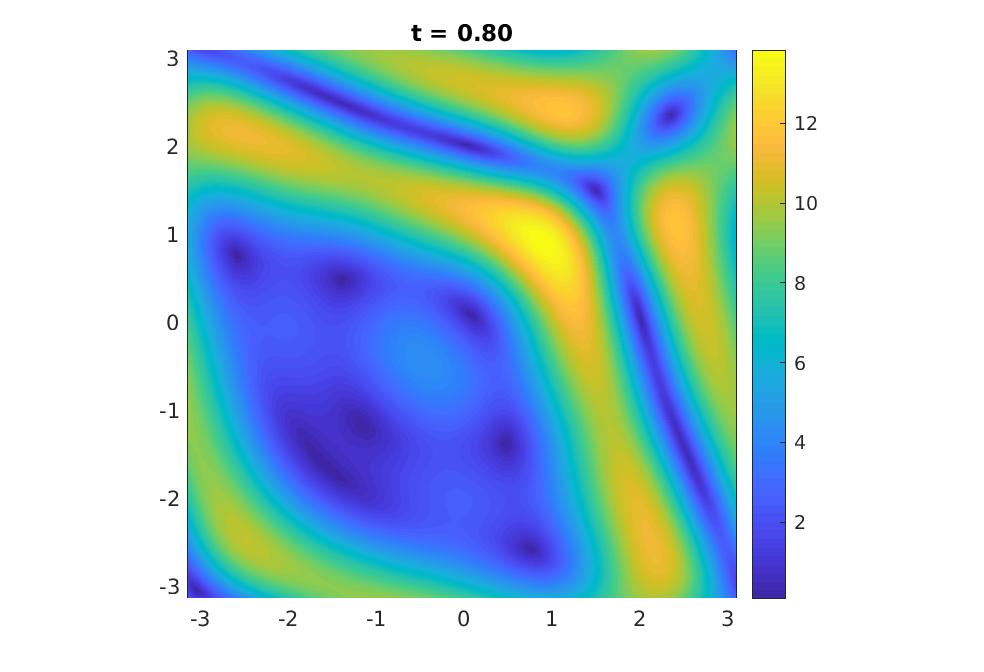}
\includegraphics[width=0.32\textwidth,trim=28mm 8mm 27mm 4mm, clip]{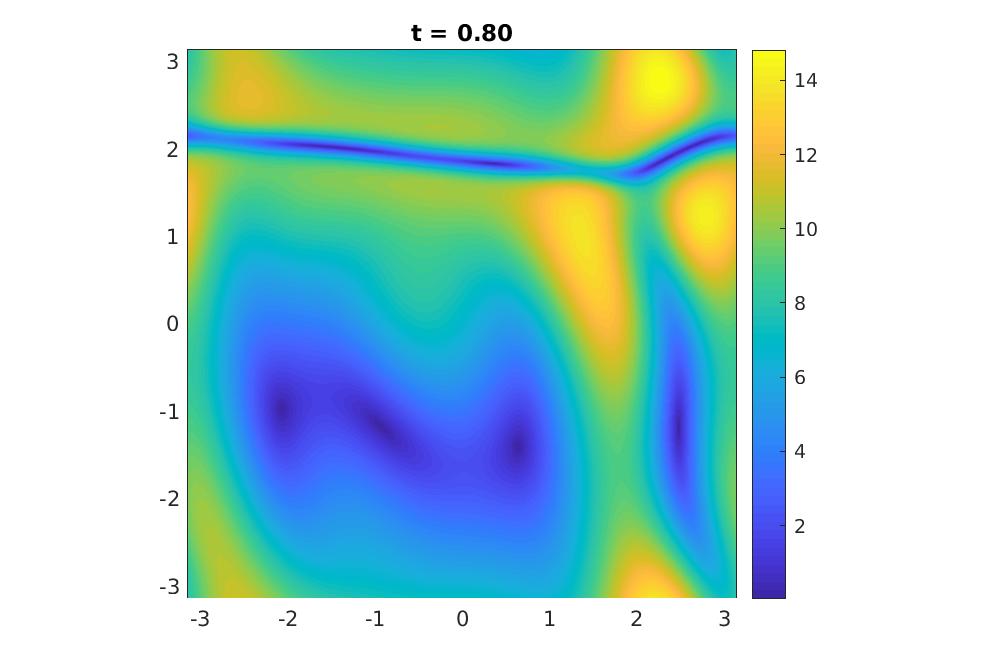}
\includegraphics[width=0.32\textwidth,trim=28mm 8mm 27mm 4mm, clip]{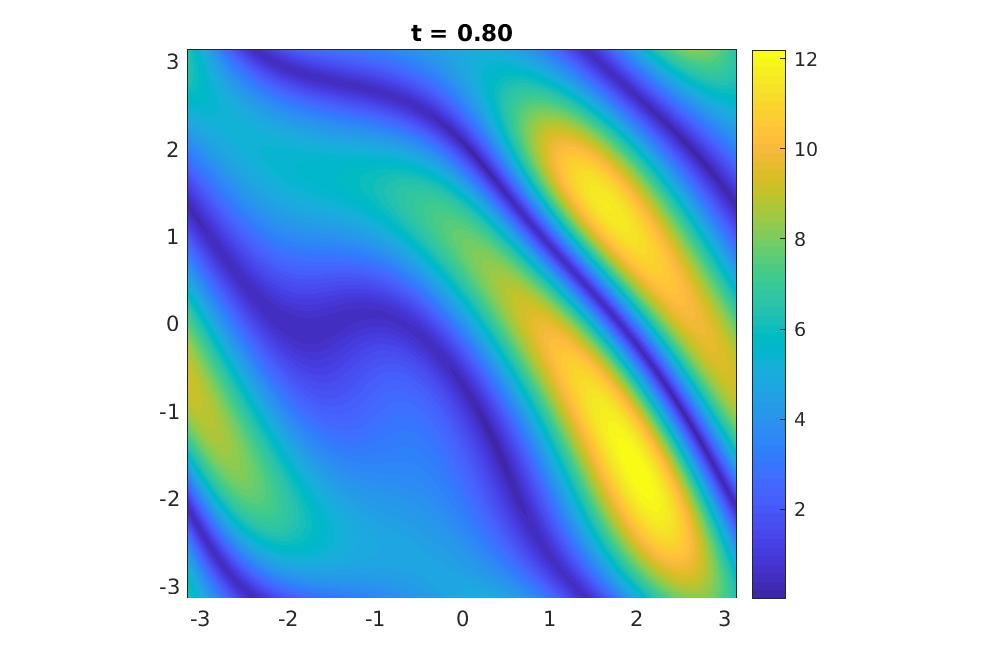}\vspace{-2mm}
	\caption{\label{compare_solu}  Solution magnitudes ($\sqrt{u_1^2+u_2^2}$) for KSE (left); r-KSE with $\nu=1.67$, $\sigma=8.37$ (middle); and r-KSE with $\nu=1.67$, $\sigma=0$ (right).  For all simulations, $\lambda = 5.01$.  Times (top to bottom): 0.0, 0.2, 0.4, 0.6, 0.8.}  
\end{figure}

\begin{figure}[!ht]
\hspace{8mm} KSE  \hspace{22mm}r-KSE, $\nu=1.67$, $\sigma=8.37$ \hspace{0.3cm} r-KSE, $\nu=1.67$, $\sigma = 0$\\
\includegraphics[width=0.32\textwidth,trim=0mm 6mm 48mm 4mm, clip]{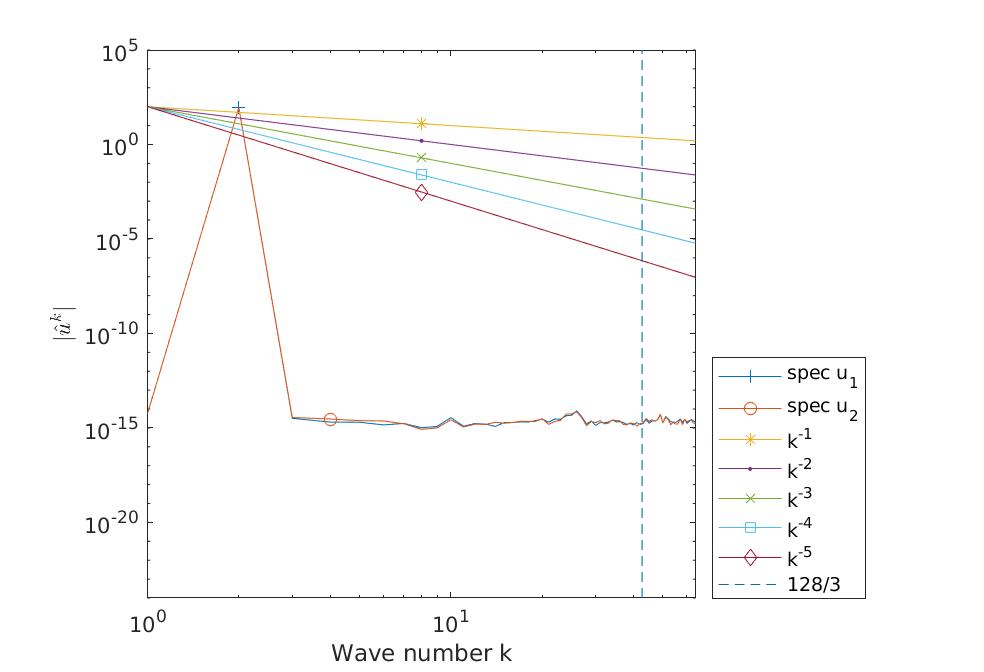}
\includegraphics[width=0.32\textwidth,trim=0mm 6mm 48mm 4mm, clip]{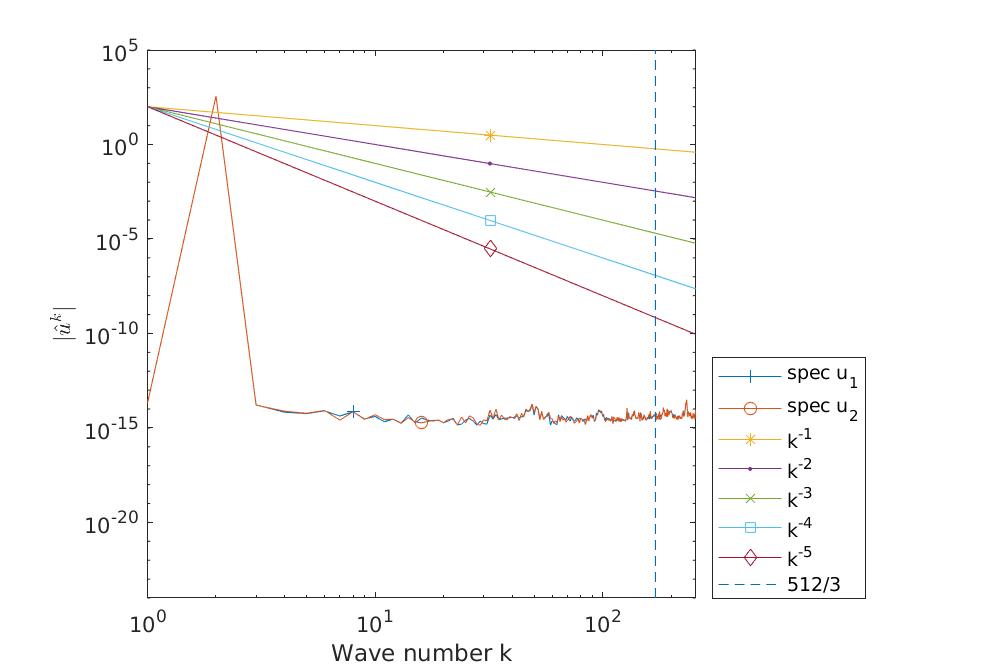}
\includegraphics[width=0.32\textwidth,trim=0mm 6mm 19mm 4mm, clip]{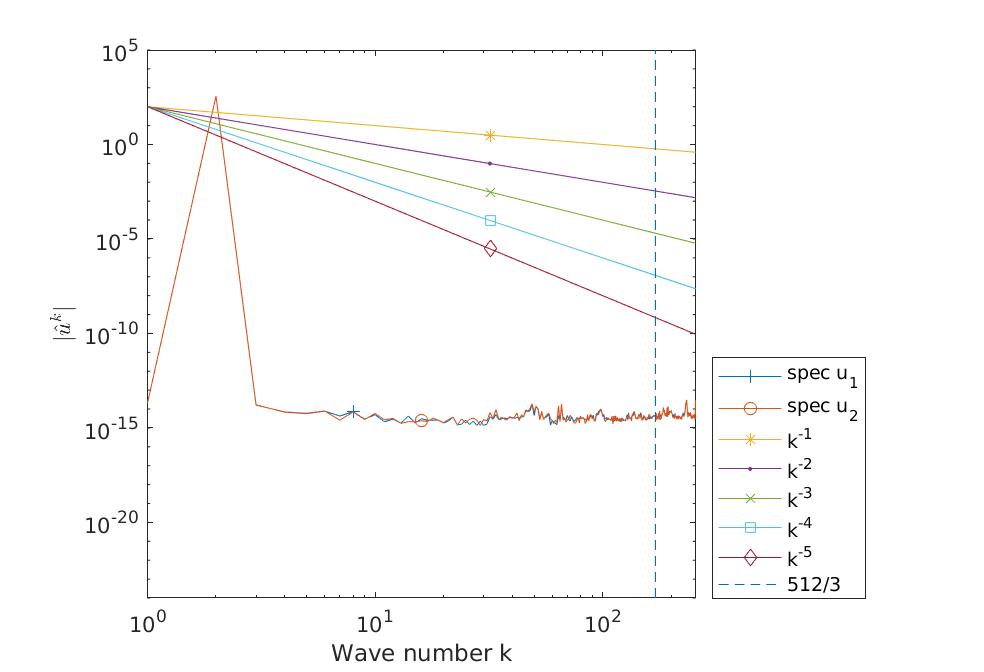}
\hspace{1cm}
\includegraphics[width=0.32\textwidth,trim=0mm 6mm 48mm 4mm, clip]{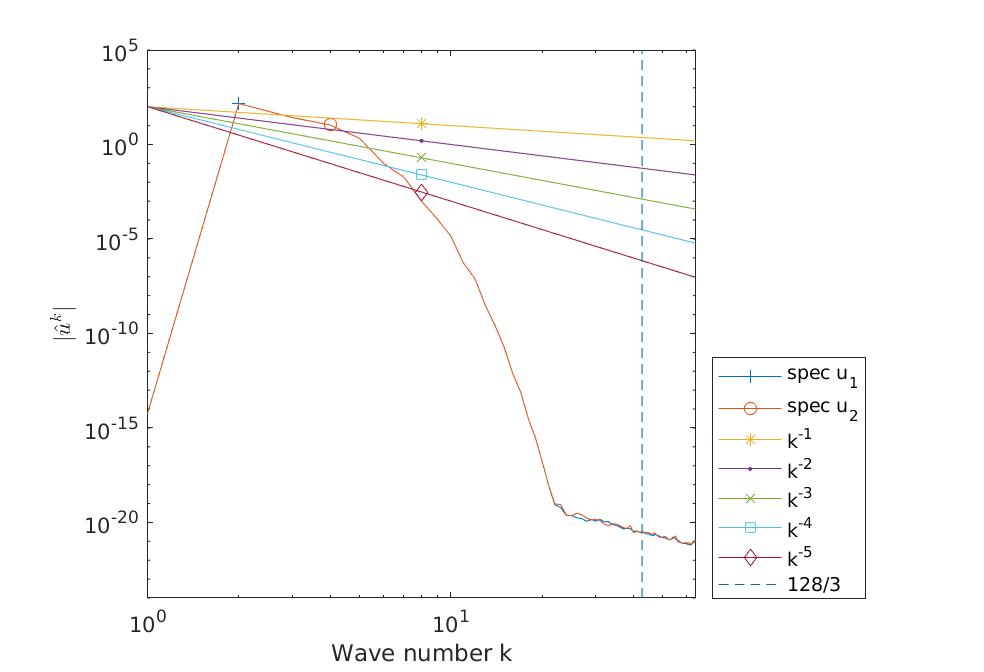}
\includegraphics[width=0.32\textwidth,trim=0mm 6mm 48mm 4mm, clip]{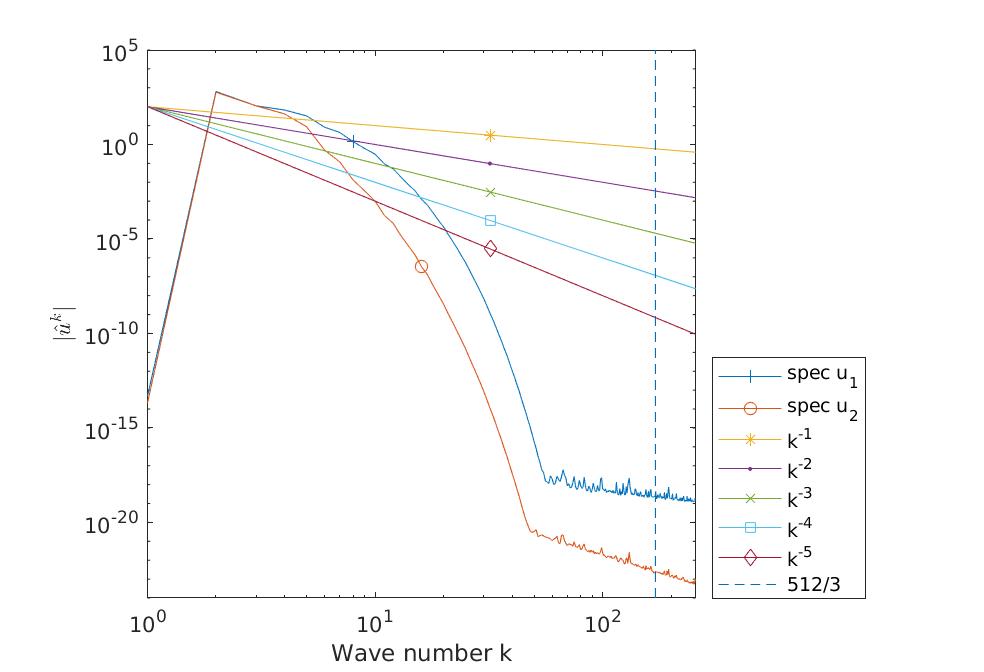}
\includegraphics[width=0.32\textwidth,trim=0mm 6mm 19mm 4mm, clip]{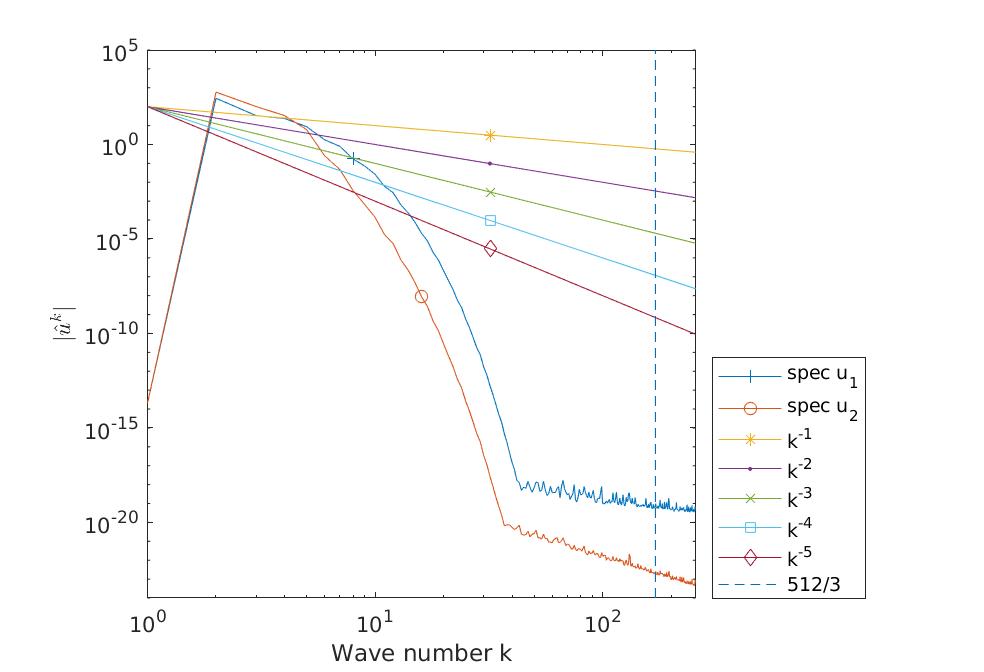}
\hspace{1cm}
\includegraphics[width=0.32\textwidth,trim=0mm 6mm 48mm 4mm, clip]{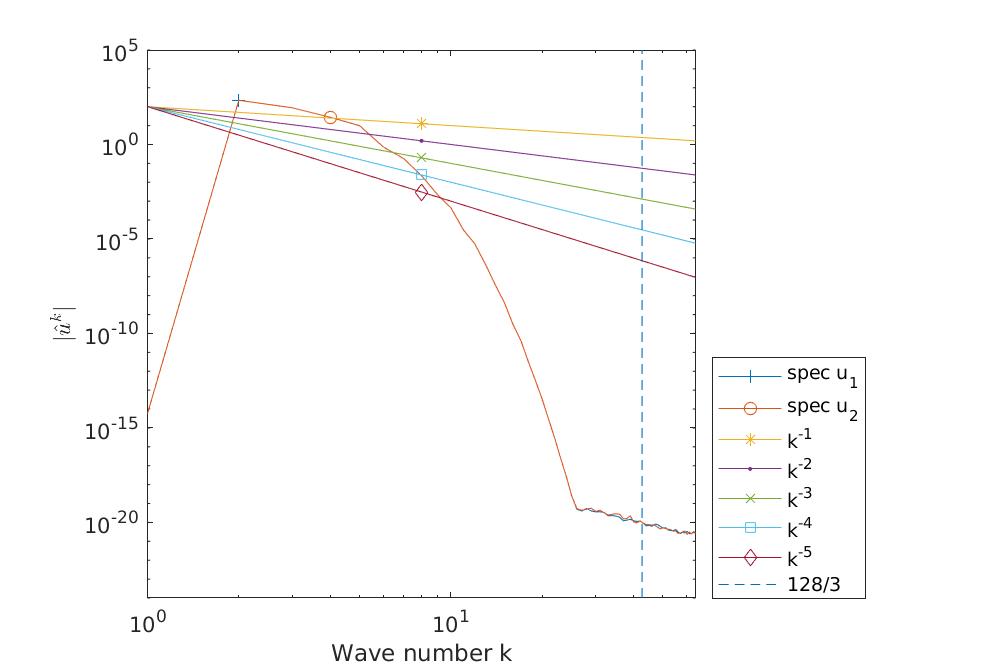}
\includegraphics[width=0.32\textwidth,trim=0mm 6mm 48mm 4mm, clip]{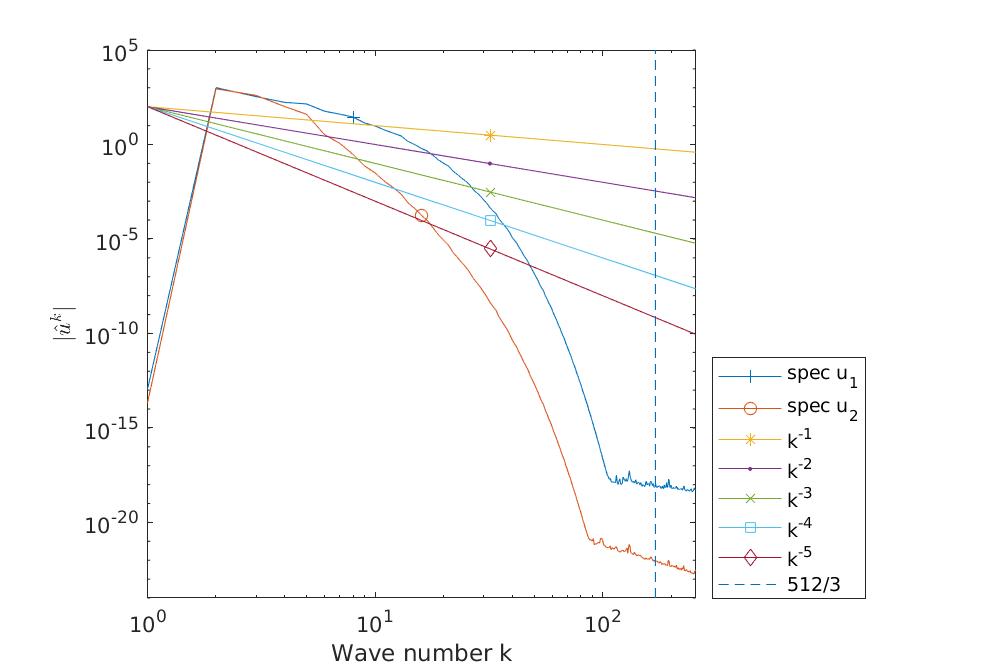}
\includegraphics[width=0.32\textwidth,trim=0mm 6mm 19mm 4mm, clip]{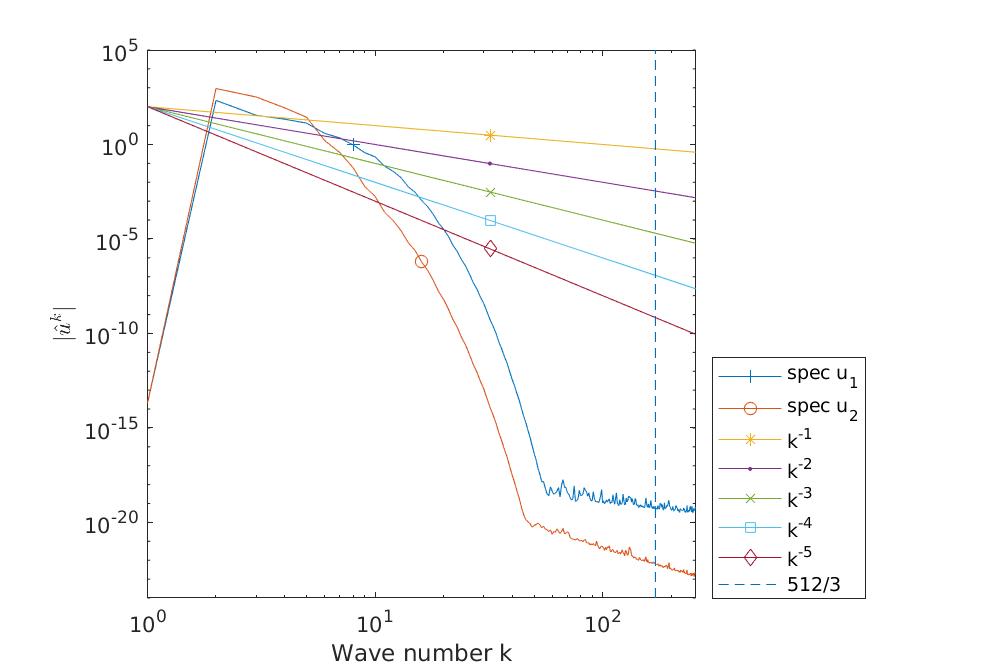}
\includegraphics[width=0.32\textwidth,trim=0mm 6mm 48mm 4mm, clip]{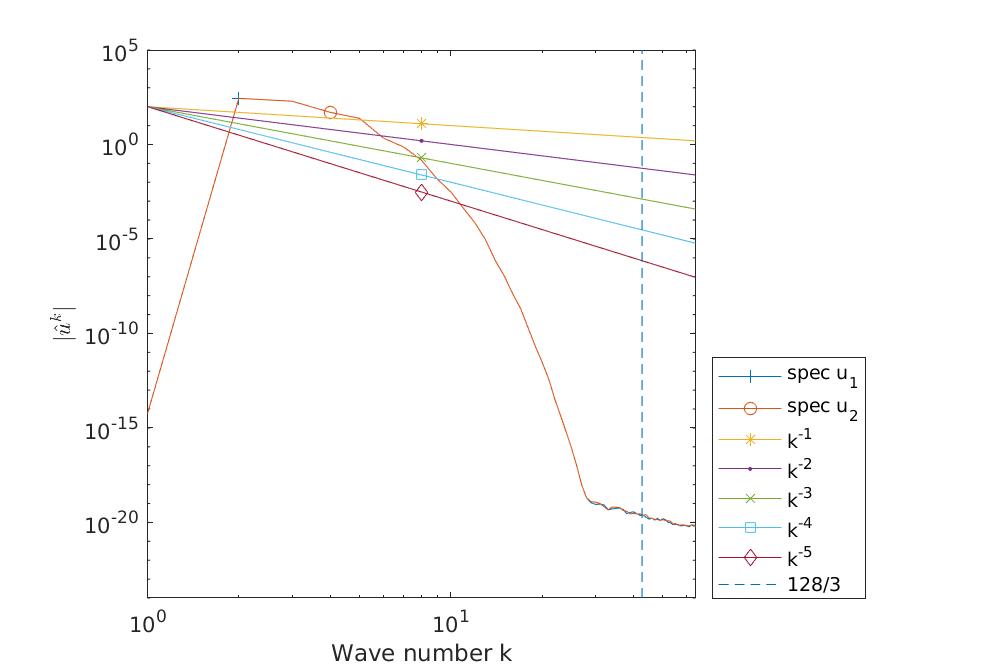}
\includegraphics[width=0.32\textwidth,trim=0mm 6mm 48mm 4mm, clip]{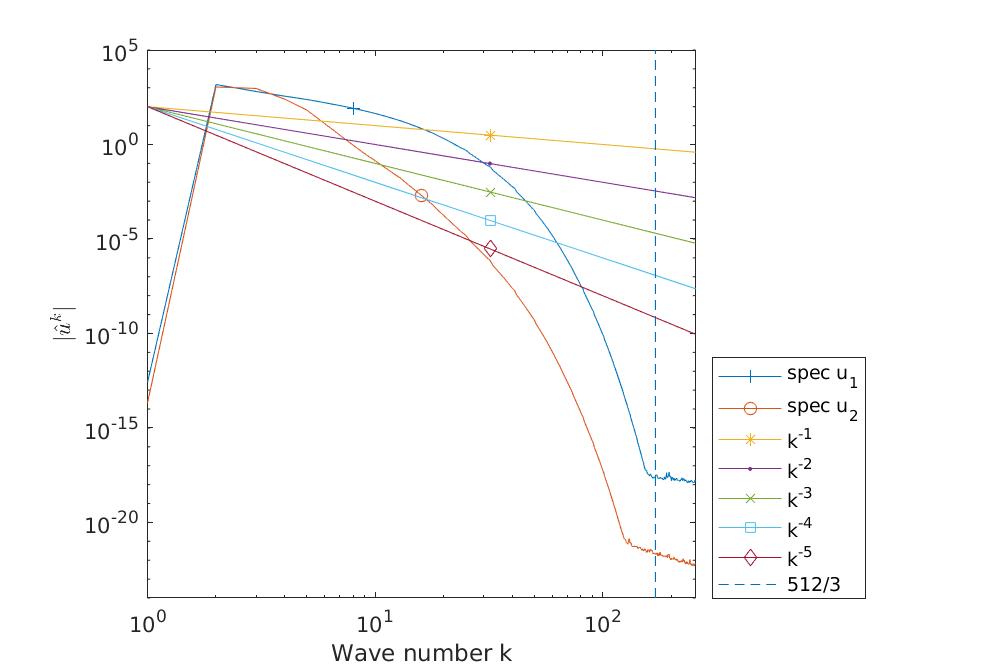}
\includegraphics[width=0.32\textwidth,trim=0mm 6mm 19mm 4mm, clip]{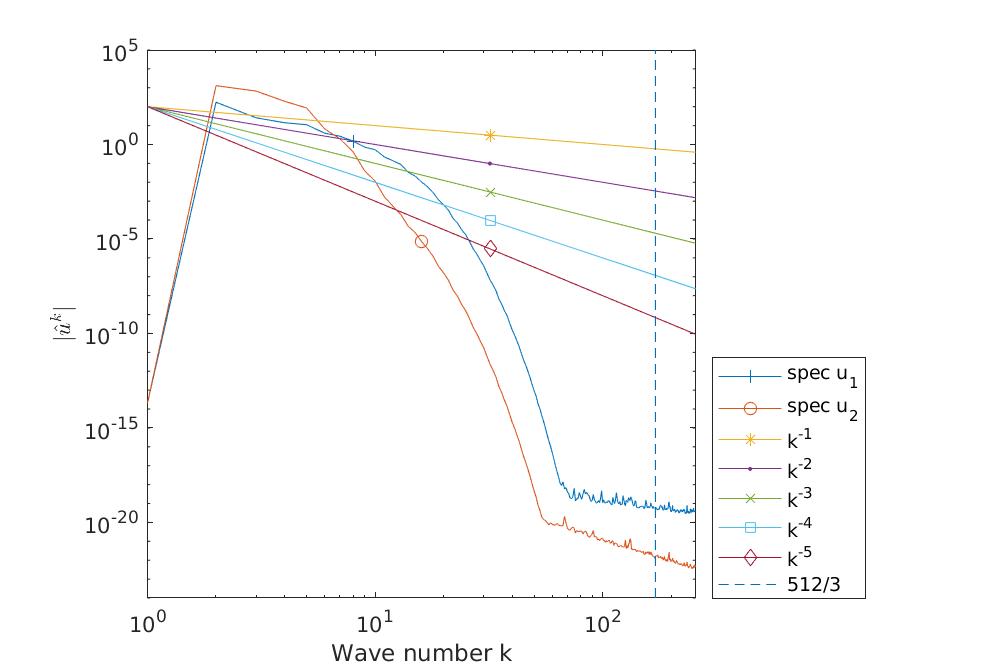}
\hspace{1cm}
\includegraphics[width=0.32\textwidth,trim=0mm 0mm 48mm 4mm, clip]{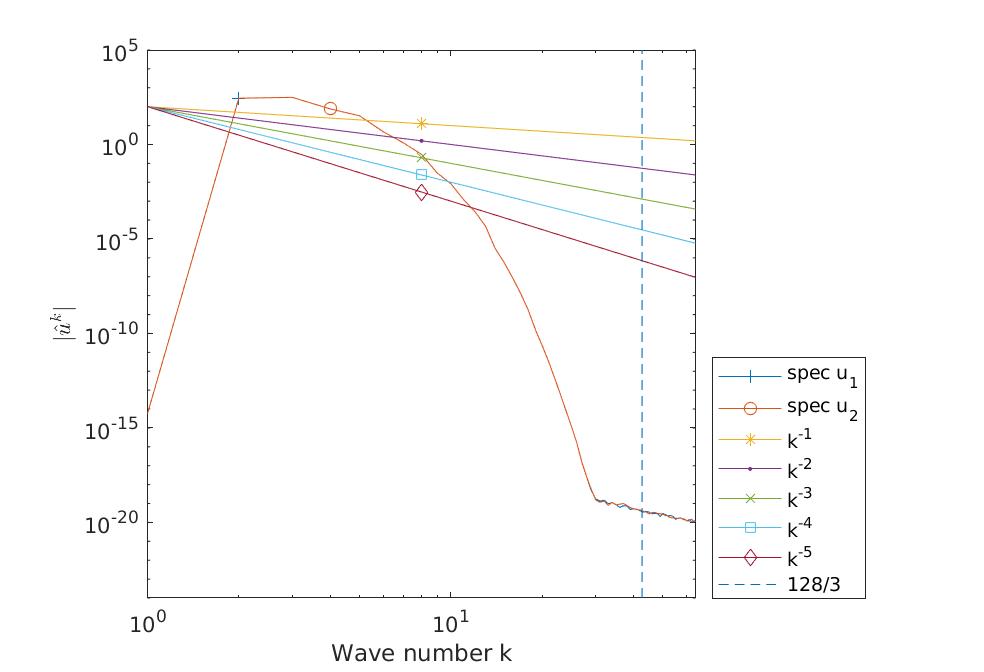}
\includegraphics[width=0.32\textwidth,trim=0mm 0mm 48mm 4mm, clip]{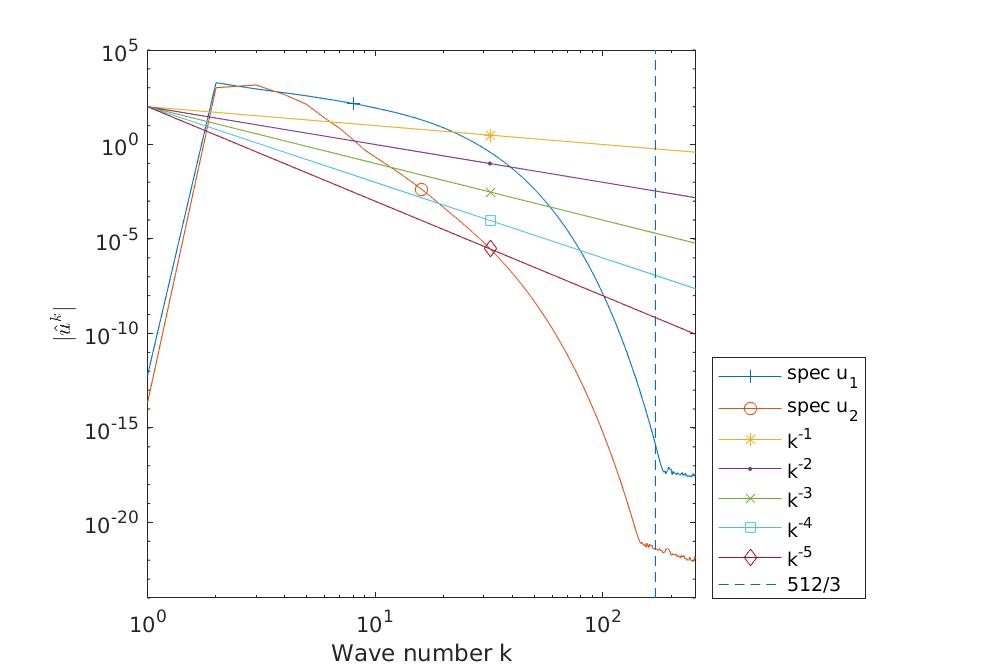}
\includegraphics[width=0.32\textwidth,trim=0mm 0mm 19mm 4mm, clip]{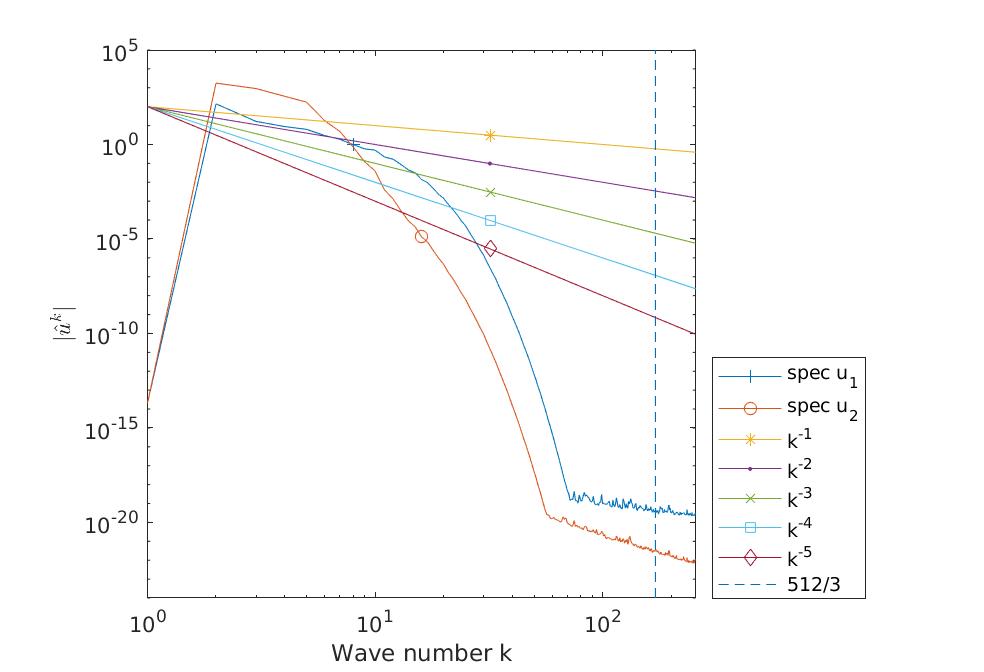}
	\caption{\label{compare_spec} Energy spectra for KSE (left); r-KSE with $\nu=1.67$, $\sigma=8.37$ (middle); and r-KSE with $\nu=1.67$, $\sigma=0$ (right).  For all simulations, $\lambda = 5.01$.  Times (top to bottom): 0.0, 0.2, 0.4, 0.6, 0.8.}  
\end{figure}

  As expected, in  Figure \ref{compare_solu} there are clear differences, both quantitative and qualitative, between the solutions in both systems.  Thus, we do not claim that the solutions of the r-KSE are reasonable approximations to the KSE.    However, a closer look does reveal some qualitative agreements.  We note that similar length-scales develop at approximately the same time, and also solution amplitudes grow at roughly the same rate.  Both systems develop new cell-like structures, although they appear to be more complex in the KSE case.  We also observed in large-time simulations (not shown here) that solutions to the KSE and r-KSE often move toward a quasi-one-dimensional state, a phenomenon was investigated in the context of the 2D KSE in \cite{Kalogirou_Keaveny_Papageorgiou_2015}.   The r-KSE solutions tend to approach this state more rapidly, perhaps due increased smoothing in one direction and anisotropic instability (although the orientation of the 1D state, vertical or horizontal, seems to be highly sensitive to small perturbations in the initial data and parameters).  

  
Qualitative similarities are also to be found in the energy spectrum.  We include these in part in order to show that the simulations are well-resolved over the time interval in question.  However, we note that in Figure \ref{compare_spec}, we see that the spectrum for $u_1$ and $u_2$ are right on top of each other for the KSE, while the spectrum of $u_1$ takes on a somewhat different character from the spectrum of $u_2$ for the r-KSE.  This phenomenon was observed by the authors in many different simulations using a wide range of different parameters and initial data, although we present only one particular representative simulation here.  We note that these are \textit{not} time averaged spectra, but spectra at each time.  

\begin{remark}
Many works on the KSE have concentrated on the so-called ``equipartion of energy'' in solutions to the KSE (see, e.g., \cite{Kalogirou_Keaveny_Papageorgiou_2015,Tomlin_Kalogirou_Papageorgiou_2018,Hyman_Nicolaenko_Zaleski_1986,Sneppen_Krug_Jensen_Jayaprakash_Bohr_1992}).  By \textit{equipartion of energy}, it is usually meant that there is a range in the time-averaged spectrum where the energy is statistically equally distributed between spherically-averaged Fourier modes.  However, we do not examine such considerations here, in part because our simulation times are too short (for reasons explained in Remark \ref{grad_remark}) to allow for a reasonable time-averaging, but also because, with $\lambda=5.01$, our number of non-zero, spherically-averaged Fourier modes is only 5, which seems to be too small of a number to draw conclusions from.
\end{remark}

\FloatBarrier

\section{Conclusion}
In this study, we have shown that the 2D r-KSE is globally well-posed, that it enjoys many of the same mathematical properties as the 2D KSE (discussed in the introduction), and that computationally,  its dynamics have a qualitative resemblance to the dynamics of the KSE (e.g., the time evolution of various norms, and the spectrum of the ``unreduced'' component).  
Therefore, we believe that the 2D r-KSE has the potential to serve as an instructive phenomenological model for the 2D KSE, playing a similar role to the 3D Burgers equation for the 3D NSE.  
Indeed, this analogy is stronger than one might initially suppose: in reducing the 3D Navier-Stokes equations to the 3D Burgers equations, one removes a term (namely the pressure gradient) to allow for a maximum principle.  
This is similar to the strategy behind reducing the 2D KSE to the 2D r-KSE, although we actually did not allow a maximum principle (except when $\sigma=0$) but only an exponential growth bound . 

Much like the 3D NSE, the 2D KSE is not known to be globally well-posed for arbitrary smooth initial data.  However, we note that there exists a wide variety of globally well-posed models that are phenomenologically similar to the 3D NSE (e.g., the 3D Navier-Stokes-$\alpha$ model, the 3D viscous Burgers equation, 3D NSE with hyperviscosity,  etc.) that can lead to useful insights about the 3D NSE, serve as instructive counter-examples, and guide new research directions.  In contrast, we know of no such globally well-posed analogues for the 2D KSE, other than the 1D KSE or models where the nonlinearity is essentially one-dimensional (which clearly have strong differences from the 2D KSE), and the r-KSE model proposed here.  The aim of the present work has been to provide a system which can act as such an analogue.

\section*{Acknowledgments}
\noindent The authors would like to thank the editor and anonymous referees for helpful comments and suggestions. Author A.L. would also like to thank Edriss S. Titi for initially inspiring an interest in the 2D Kuramoto-Sivashinsky equations.  Author A.L. was partially supported by NSF grants DMS-1716801 and CMMI-1953346. 
 
\begin{scriptsize}

\end{scriptsize}

\end{document}